   \def\MR#1{}
\newtheorem{theorem}{Theorem}
\newtheorem{lemma}[theorem]{Lemma}
\newtheorem{corollary}[theorem]{Corollary}
\newtheorem{proposition}[theorem]{Proposition}
\newtheorem{remark}[theorem]{Remark}
\newtheorem{definition}[theorem]{Definition}
\newtheorem{claim}[theorem]{Claim}
\newcommand{\abar}{\bar{a}}
\title{
On unavoidable induced subgraphs in large prime graphs %\\ and model theory?
}
\author{M. Malliaris}
\address{Department of Mathematics, University of Chicago, 5734 S. University Avenue, Chicago, IL 60637}
\email{mem@math.uchicago.edu}
\author{C. Terry}
\address{Department of Mathematics, Statistics and Computer Science, University of Illinois at Chicago, 851 S. Morgan Street, Chicago, IL, 60607}
\email{cterry3@uic.edu}
\thanks{C.T. and M.M. thank MSRI for partially supporting their semester visits in Spring 2014 
via NSF grant 0932078 000, where conversations about [2] influenced the current project. 
M.M. was partially supported by a Sloan fellowship and NSF grant 1300634.
}
\date{\today}
\begin{document}

%\subjclass[2010]{fill (Primary), fill (Secondary)}  %% 

\maketitle

\begin{abstract}
Chudnovsky, Kim, Oum, and Seymour recently established that any prime graph contains one of a short list of induced prime subgraphs \cite{CKOS}. 
In the present paper we reprove their theorem using many of the same ideas, but with the key model-theoretic ingredient of first determining 
the so-called amount of stability of the graph. This approach changes the applicable Ramsey theorem, improves the bounds and offers a different 
structural perspective on the graphs in question. Complementing this, we give an infinitary proof which implies the finite result. 
\end{abstract}

%************************************************************************
\section{Introduction}

%************************************************************************
\setcounter{theorem}{0}
\numberwithin{theorem}{section}

Recently Chudnovsky, Kim, Oum, and Seymour established that any prime graph contains one of a short list of induced prime subgraphs \cite{CKOS}. A module of a graph $G=(V,E)$ is a set of vertices $X\subseteq V$ such that any vertex $v\in V\setminus X$ is either connected or non-connected to all vertices in $X$. Prime graphs are graphs which contain no non-trivial modules. The interest in prime graphs arises from questions around so-called modular decompositions of graphs, as well as the fact that the celebrated Erd\H{o}s-Hajnal conjecture reduces to the case where the omitted graph is prime.

In the present paper we re-prove the main theorem of \cite{CKOS} making use of model-theoretic ingredients, in a way that improves the bounds and offers a different structural perspective on the graphs in question. Our background aim is to exemplify the usefulness of model-theoretic ideas in proofs in finite combinatorics. This approach complements that of \cite{MS}, where certain indicators of complexity which had been identified by people working in combinatorics coincided with model theoretic dividing lines, so could be characterized by means of model theory.

The model-theoretic contribution of the present argument may be described as follows. The proof of \cite{CKOS} proceeds by means of several cases, sketched in section 2 below, and applies Ramsey's theorem as a main tool. In \cite{MS} it was shown that Ramsey's theorem works much better when the graph is so-called stable, a finitization of an important structural property identified by model theory (for history, see the introduction to \cite{MS} or the original source \cite{classification}). Our approach in the present paper, then, is essentially to reconfigure the proof of \cite{CKOS} so that the procedure for extracting the given configurations is different depending on the degree of stability of the graph, and can take advantage of this additional structural information.  

We believe this approach raises some interesting questions about model theory's potential contribution to calibrating arguments about finite objects. We have not tried to construct examples showing the bound we obtain is optimal, in part because we believe that a further development of what might be called `model-theoretic Ramsey theory' in the spirit of \cite{MS} may, in general, allow for even finer calibrations in the finite setting. At the same time, it is important to add that model theory works here to amplify the combinatorial analysis rather than to replace it. Already in the present argument, the contribution of combinatorics in e.g. identifying definitions such as `module' (which is much stronger than, if in some sense analogous to, the model-theoretic notion of an indiscernible sequence) and in isolating the original collection of induced configurations appears essential.  It is the interaction of these ideas and perspectives which to us seems most interesting.  

%MM
Complementing this approach, the paper concludes with the proof of an infinite analogue of Theorem \ref{mainthm} which implies the finite version, but without explicit bounds.
 
%Thus this paper allows for an interesting comparison between two model theoretic approaches to the same theorem.

%MM
\tableofcontents

%***********************************************************************************************************************
\section{Definitions and notation}

In this section we state relevant definitions and notation, most of which, but not all, is from \cite{CKOS}.  Given a set $X$, let ${X\choose 2}=\{Y\subseteq X: |Y|=2\}$.  A graph is a pair $(V,E)$ where $V$ is a set of vertices and $E\subseteq {V\choose 2}$ is a set of edges. Unless otherwise stated, all of the following definitions and notation apply to both infinite and finite graphs.  Given a graph $G$, we write $xy$ as shorthand for the edge $\{x,y\}$.  We will often write $V(G)=V$ and $E(G)=E$.  A set of vertices $X$ inside a graph is called a \emph{module} if every vertex outside of $X$ is adjacent to every vertex in $X$ or non-adjacent to every vertex in $X$.  A module $X$ of a graph $G$ is called \emph{trivial} if $|X|=1$ or $X=V(G)$.  A graph $G$ is called \emph{prime} if it has no non-trivial modules.  We say a set of vertices $X$ is \emph{independent} if every pair of vertices is $X$ is non-adjacent, and we say $X$ is \emph{complete} if every pair of vertices in $X$ is adjacent.  We say a vertex $v$ is \emph{mixed} on a subset $X\subseteq V$ if there are $x,y\in X$ such that $vx\in E$ and $vy\notin E$.  Given a graph $G=(V,E)$, the \emph{compliment of $G$}, denoted $\overline{G}$, is the graph with vertex set $V$ and edge set ${V\choose 2}\setminus E$.  Given two graphs $G$ and $H$, we will say $G$ ``contains a copy of $H$" to mean there is an induced subgraph of $G$ which is isomorphic to $H$.  

We now introduce important structural configurations which will appear throughout the paper.  Fix an integer $n\geq 1$.
\begin{enumerate}[$\bullet$] 
\item A \emph{half-graph} of height $n$ is a graph with $2n$ vertices $a_1,\ldots, a_n, b_1,\ldots, b_n$ such that $a_i$ is adjacent to $b_j$ if and only if $i\leq j$. 
\item The \emph{bipartite half-graph} of height $n$, $H_n$, is a graph with $2n$ vertices $a_1,\ldots, a_n, b_1,\ldots, b_n$ such that $a_i$ is adjacent to $b_j$ if and only if $i\leq j$ and such that $\{a_1,\ldots, a_n\}$ and $\{b_1,\ldots, b_n\}$ are independent sets.
\item The \emph{half split graph} of height $n$, $H'_n$, is a graph with $2n$ vertices $a_1,\ldots, a_n, b_1,\ldots, b_n$ such that $a_i$ is adjacent to $b_j$ if and only if $i\leq j$ and such that $\{a_1,\ldots, a_n\}$ is an independent set and $\{b_1,\ldots, b_n\}$ is a complete set (a graph is a \emph{split graph} if its vertices can be partitioned into a complete set and an independent set).  
\item Let $H_{n,I}'$ be the graph obtained from $H'_n$ by adding a new vertex adjacent to $a_1,\ldots, a_n$ (and no others).  Let $H_n^*$ be the graph obtained from $H'_n$ by adding a new vertex adjacent to $a_1$ (and no others).
\item The \emph{thin spider} with $n$ legs is a graph with $2n$ vertices $a_1,\ldots, a_n, b_1,\ldots, b_n$ such that $\{a_1,\ldots, a_n\}$ is an independent set, $\{b_1,\ldots, b_n\}$ is a complete set, and $a_i$ is adjacent to $b_j$ if and only if $i=j$.  The \emph{thick spider} with $n$ legs is the compliment of the thin spider with $n$ legs.  In particular, it is a graph with $2n$ vertices $a_1,\ldots, a_n, b_1,\ldots, b_n$ such that $\{a_1,\ldots, a_n\}$ is an independent set, $\{b_1,\ldots, b_n\}$ is a complete set, and $a_i$ is adjacent to $b_j$ if and only if $i\neq j$.   A \emph{spider} is a thin spider or a thick spider.
\item A sequence of distinct vertices $v_0,\ldots, v_m$ in a graph $G$ is called a \emph{chain} from a set $I\subseteq V(G)$ to $v_m$ if $m\geq 2$ is an integer, $v_0, v_1\in I$, $v_2,\ldots, v_m\notin I$, and for all $i>0$, $v_{i-1}$ is either the unique neighbor or the unique non-neighbor of $v_i$ in $\{v_0,\ldots, v_{i-1}\}$.  The \emph{length} of a a chain $v_0,\ldots, v_m$ is $m$.
\end{enumerate}
Given an integer $m\geq 1$, $K_m$ denotes the complete graph on $m$.  Given integers $m,n$, $K_{m,n}$ denotes the complete bipartite graph with parts of sizes $m$ and $n$, that is, the graph with $m+n$ vertices $\{a_1,\ldots, a_m, b_1,\ldots, b_n\}$ such that $\{a_1,\ldots, a_m\}$ and $\{b_1,\ldots, b_n\}$ are independent and $a_i$ is adjacent to $b_j$ for each $1\leq i\leq m$ and $1\leq j\leq n$.  Given a graph $G=(V,E)$, the \emph{line graph} of $G$ is the graph $G'$ which has vertex set $V(G')=E(G)$ and edge set consisting of pairs of elements $e_1\neq e_2\in E(G)$ such that $e_1\cap e_2\neq \emptyset$.  Given an integer $m$, a \emph{path} of length $m$ is a set $v_0,\ldots, v_m$ vertices such that $v_i$ is ajacent to $v_j$ if and only if $j=i+1$ or $i=j+1$.  The \emph{$m$-subdivision} of a graph $G$ is the graph obtained from $G$ by replacing every edge in $G$ with an induced path of length $m+1$.  A perfect matching of height $n$ is the disjoint union $n$ edges, that is, a graph with $2n$ vertices $\{a_1,\ldots, a_n, b_1,\ldots, b_n\}$ such that $\{a_1,\ldots, a_n\}$ and $\{b_1,\ldots, b_n\}$ are independent and $a_i$ is adjacent to $b_j$ if and only if $i=j$.

Note that in all of these definitions except that of a chain and of an $m$-subdivision, it makes sense to replace $m$ and $n$ by any cardinals $\lambda$ and $\mu$.  In section 6, we will wish to discuss versions of some of these configurations where $m$ or $n$ is replaced by an infinite cardinal.  In those cases, we will use the same notation as laid out in this section.

%***********************************************************************************************************************
\section{Outline of proof of main theorem from \cite{CKOS}}\label{oldproofoutline}

In this section we give an outline of the proof of Theorem \ref{mainthm} presented in \cite{CKOS}.  We do this to allow for comparison to the proofs we present in sections \ref{finitary} and \ref{infinitary}.  Our outline consists of the statements of the propositions from \cite{CKOS} which form the main steps in their proof,  then a flow chart illustrating the structure of the proof.  We think this outline is sufficient for understanding the global structure of the proof.  For more details we direct the reader to the original paper \cite{CKOS}.  Throughout $R(n_1,\ldots, n_k)$ denotes the smallest integer $m$ such for that any coloring of the edges of $K_m$ with $k$, there is complete graph on $n_i$ vertices in color $i$ for some $1\leq i\leq k$.

\begin{theorem}[Theorem 1.2 of \cite{CKOS}]\label{mainthm}
For every integer $n\geq 3$ there is $N$ such that every prime graph with at least $N$ vertices contains one of the following graphs or their compliments as an induced subgraph.
\begin{enumerate}[(1)]
\item The $1$-subdivision of $K_{1,n}$ (denoted by $K_{1,n}^{(1)}$). 
\item The line graph of $K_{2,n}$.
\item The thin spider with $n$ legs.
\item The bipartite half-graph of height $n$.
\item The graph $H'_{n,I}$.
\item the graph $H_n^*$.
\item A prime graph induced by a chain of length $n$.
\end{enumerate}
\end{theorem}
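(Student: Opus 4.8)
The plan is to run the case analysis of \cite{CKOS} but to split first on the \emph{amount of stability} of $G$, and only then to apply a Ramsey-type theorem --- a \emph{different} one, extracting different configurations, in each branch. Fix the target $n$ and a threshold $t = t(n)$, large but depending only on $n$, and ask whether $G$ contains an induced half-graph of height $t$. This single dichotomy drives the whole argument, and it is what produces the improved bound on $N$.

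In the stable case --- $G$ contains no half-graph of height $t$, i.e.\ $G$ is ``$t$-stable'' --- the stable Ramsey theorem of \cite{MS} applies, and from $N$ vertices it produces a homogeneous set $I$, complete or independent, as large as we need relative to $n$, with $N$ required only polynomially large in $|I|$, in contrast to the exponential dependence forced by classical Ramsey. Since $G$ is prime and $1 < |I| < |V(G)|$, the set $I$ is not a module, so some vertex is mixed on $I$; from such a vertex one builds a chain leaving $I$. If some chain reaches length $n$ we are in case (7). Otherwise every chain from $I$ has bounded length, and this boundedness --- together with primeness, and with $t$-stability constraining the neighbourhoods of the vertices appended along a chain --- forces the vertices outside $I$ to be joined to $I$ in very restricted patterns. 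A further, cheap, stable Ramsey clean-up then assembles these vertices into the $1$-subdivision of $K_{1,n}$, the line graph of $K_{2,n}$, or a thin spider, i.e.\ one of cases (1)--(3), up to complementation.

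In the unstable case $G$ contains a half-graph $H$ of height $t$ on vertices $a_1, \dots, a_t, b_1, \dots, b_t$. Since $t$ depends only on $n$, we may apply classical Ramsey \emph{inside} $\{a_1,\dots,a_t\}$ and inside $\{b_1,\dots,b_t\}$ without harming the final bound, passing to a sub-half-graph in which each of these spines is complete or independent; thus $G$ already contains a large bipartite half-graph, half split graph, or one of their complements. If the bipartite half-graph has height $n$ we are in case (4). Otherwise we invoke primeness, which supplies vertices mixed on the relevant homogeneous spine; following chains from these vertices, one either enlarges the half split configuration to $H'_{n,I}$ or to $H_n^*$ (cases (5)--(6)), falls into case (4), or once more produces a chain of length $n$ and lands in case (7). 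Throughout, one passes repeatedly to sub-half-graphs in order to keep the index sets in the order type required at each step.

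The main difficulty will be the chain analysis and its exhaustiveness: one must show that whenever configurations (1)--(6) all fail to appear, the failures at the various intermediate steps can be spliced together into a single chain of length $n$. This requires careful bookkeeping of how primeness --- the existence of mixed vertices --- interacts with the nearly homogeneous substructures produced by the two Ramsey theorems, and, in the stable branch, a genuine use of model theory: exploiting the absence of a half-graph of height $t$ to bound how the adjacency patterns of appended vertices can proliferate. A secondary, purely quantitative, point is to make certain that the stable Ramsey theorem is invoked \emph{only} when $G$ is $t$-stable, so that the expensive classical Ramsey estimates are paid only against the fixed parameter $t$; this is precisely what gives the improvement over \cite{CKOS}.
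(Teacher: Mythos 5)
Your overall architecture is essentially the paper's: split on an amount-of-stability parameter fixed in terms of $n$, pay only a cheap Ramsey price in the stable branch to extract a large homogeneous set (so that the expensive classical Ramsey estimates are charged only against a quantity depending on $n$), handle the unstable branch using the witnessing configuration itself, and finish both branches with the chain analysis of \cite{CKOS} (Propositions \ref{3.1}, \ref{4.1}, \ref{5.1}, via Remark \ref{reduction}). The differences are in implementation. The paper measures stability by the tree rank of a type tree and proves its own rank--height trade-off (Theorem \ref{3.5} together with Lemma \ref{indset} and Corollary \ref{treecormain}) rather than quoting the stable Ramsey theorem of \cite{MS} as a black box; and in the large-rank branch it reads the building blocks (matching, thin spider, bipartite half-graph, half split graph) directly off the spine of a full binary type tree via a four-colour Ramsey argument (Proposition \ref{new3.1}), rather than off a half-graph. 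Your half-graph dichotomy, with classical Ramsey applied separately to the two spines of the half-graph to reach a bipartite half-graph or half split graph and then the Proposition \ref{5.1} analysis, is essentially what the paper does in its infinitary argument (Proposition \ref{infinite3.1}). Quantitatively both routes make $N$ polynomial in $m$ with exponent depending only on $n$, beating $R(m,m)$, so your bound claim is plausible.

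One concrete misstep: in the stable branch you assert that, absent a chain of length $n$, $t$-stability forces the outcome into cases (1)--(3). That is not so. A graph with no half-graph of height $t$ can still contain bipartite half-graphs of height $n$ and half split graphs of height $g(n)$, since their instability height is far below $t$; and the analysis from a large homogeneous set (Proposition \ref{3.1} of \cite{CKOS}) can terminate in exactly those outcomes, after which the half split graph must be fed into the Proposition \ref{5.1} analysis, yielding $H'_{n,I}$ or $H_n^*$. So cases (4)--(6) can be unavoidable in the stable branch as well, and the mechanism you describe (``stability constrains the attachment patterns, so a clean-up gives (1)--(3)'') would fail for such graphs. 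This does not damage the theorem, since all seven outcomes are permitted; the repair is simply to run the full \cite{CKOS} chain analysis from the homogeneous set --- which is exactly what the paper does by invoking Remark \ref{reduction} --- instead of expecting stability to prune the case list.
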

\noindent We will use the following fact from \cite{CKOS}.

\begin{proposition}[Corollary 2.3 from \cite{CKOS}]\label{cor2.3}
Let $t>3$.  Every chain of length $t$ contains a chain of length $t-1$ inducing a prime subgraph.
\end{proposition}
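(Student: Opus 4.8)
The plan is to analyze the structure of a chain $v_0, v_1, \ldots, v_t$ directly, exploiting the defining property that for each $i > 0$ the vertex $v_{i-1}$ is the unique neighbor or unique non-neighbor of $v_i$ among $\{v_0, \ldots, v_{i-1}\}$. First I would observe that this uniqueness condition makes modules in the induced subgraph on $\{v_0, \ldots, v_t\}$ very rigid: if $X$ is a nontrivial module, then for any $v_i \notin X$ that dominates (as unique neighbor/non-neighbor) some $v_j$ with $j > i$, the set $X$ cannot contain $v_j$ unless it also ``sees'' $v_i$ consistently, and one should be able to trace the chain structure to pin down exactly where a putative module can live. The natural candidate for a short sub-chain inducing a prime graph is to delete one of the two ``anchor'' vertices $v_0$ or $v_1$, or the last vertex $v_t$, or more generally to pass to $\{v_0, \ldots, v_{t-1}\}$ or a similar initial or near-initial segment; since $t - 1 > 2$ by hypothesis $t > 3$, such a segment is still a legitimate chain of length $t-1$.

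The key steps, in order, would be: (1) show that any chain induces a connected graph (or its complement is connected) — more precisely, understand the two ``ends'' of the chain well enough to control module membership; (2) suppose toward a contradiction that every chain of length $t-1$ obtained by the allowed deletions fails to be prime, i.e. each has a nontrivial module $X$; (3) use the uniqueness property at the step where the deleted vertex was added back to derive that such a module $X$ would have to behave in a contradictory way — in particular a vertex $v_i$ that is the \emph{unique} neighbor of $v_{i+1}$ forces any module containing $v_{i+1}$ but not $v_i$ to have size $1$, which is the main leverage; (4) combine the constraints from several deletions to conclude that the full chain $v_0, \ldots, v_t$ itself has a nontrivial module, contradicting nothing directly — so instead one shows that at least one of the deletions must succeed, by a counting or extremal argument on where modules of the various sub-chains can be located.

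The main obstacle I expect is step (3)–(4): controlling \emph{all} the sub-chains simultaneously. A single nontrivial module in one sub-chain is easy to rule out by the uniqueness property, but we need that among the (boundedly many) natural candidate sub-chains of length $t-1$, at least one is actually prime, and a priori different sub-chains could have modules sitting in different places. The resolution should come from the observation that a module of a sub-chain, if it exists, is forced by the chain's uniqueness structure into an essentially unique ``location'' — typically an interval of indices near one end — so that after deleting a carefully chosen vertex (say $v_t$, killing modules near the top, versus $v_2$, killing modules near the bottom), no room is left for any module at all. I would therefore expect the final argument to be a short case analysis on the possible locations of a module in a chain, showing each location is destroyed by one of two or three explicit choices of vertex to remove, and checking in each case that the remaining $t-1$ vertices still form a chain. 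This mirrors the role Proposition~\ref{cor2.3} plays downstream: it lets the main proof, whenever it produces a long chain, immediately pass to a long chain that is additionally prime.
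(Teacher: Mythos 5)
You should first note that the paper does not prove this statement at all: it is quoted as Corollary~2.3 of \cite{CKOS} and used as a black box, so your argument has to stand entirely on its own, and as written it is a plan rather than a proof. More importantly, the one concrete piece of leverage you offer in step (3) is false. You claim that if $v_i$ is the unique neighbor of $v_{i+1}$ among the earlier vertices, then any module containing $v_{i+1}$ but not $v_i$ must have size $1$. Counterexample with $t=4$: take $v_0,\ldots,v_4$ with edge set $\{v_1v_2,\ v_1v_3,\ v_0v_3,\ v_3v_4\}$. This is a chain ($v_2$ has unique neighbor $v_1$ in $\{v_0,v_1\}$, $v_3$ has unique non-neighbor $v_2$ in $\{v_0,v_1,v_2\}$, $v_4$ has unique neighbor $v_3$ in $\{v_0,\ldots,v_3\}$), yet $\{v_0,v_4\}$ is a nontrivial module containing $v_4$ but not its unique neighbor $v_3$. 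All the uniqueness condition really gives is that the excluded vertex $v_i$ must see every member of the module the way it sees $v_{i+1}$ --- much weaker than what you assert. The rigidity statement a proof along these lines actually needs, and which you never formulate or prove, is of the following shape: since for each $i\geq 2$ the vertex $v_i$ distinguishes $v_{i-1}$ from every $v_k$ with $k\leq i-2$, any module of the graph induced by $v_0,\ldots,v_t$ that contains two vertices $v_a,v_b$ with $a<b$ must also contain $v_{b+1},\ldots,v_t$; hence every nontrivial module has the form $\{v_a\}\cup\{v_b,\ldots,v_t\}$. (Note this is consistent with the counterexample above, where $a=0$, $b=t$.)

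Even granting such a classification, the decisive part is your step (4), and it is exactly the part you leave as ``a counting or extremal argument'' and ``a short case analysis'' that is never carried out. The candidate deletions have to be $v_0$, $v_1$, or $v_t$ (deleting an interior $v_i$ generally destroys the chain property, since $v_{i+1}$ sees all of $v_0,\ldots,v_{i-1}$ uniformly), and truncation to $v_0,\ldots,v_{t-1}$ alone cannot suffice: if $v_0$ is isolated and $v_1,\ldots,v_t$ induce a path, every initial segment is again non-prime, so the real work is to show that dropping $v_0$ or $v_1$ kills every module of the displayed form --- and that argument is absent. (In the counterexample above it is the deletion of $v_0$ that works, leaving a $P_4$.) You do correctly flag that one must re-check the chain property after deletion and that $t-1>2$ is needed for primeness to be possible at all, but since the statement is false for $t=3$, a complete proof must show exactly how the remaining at least four vertices exclude all modules, and nothing in the proposal does this. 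So the proposal identifies reasonable candidates but contains neither the structural lemma nor the concluding case analysis, and its stated key step fails.
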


\noindent The following are the propositions which form the main steps of the proof of Theorem \ref{mainthm} in \cite{CKOS}.

\begin{proposition}[Proposition 3.1 from \cite{CKOS}]\label{3.1}
For all integers $n, n_1, n_2>0$, there is $N=f(n,n_1,n_2)$ such that every prime graph with an $N$-vertex independent set contains an induced subgraph isomorphic to
\begin{enumerate}[(1)]
\item a spider with $n$ legs,
\item $\overline{L(K_{2,n})}$,
\item the bipartite half-graph of height $n$,
\item the disjoint union of $n_1$ copies of $K_2$, denoted $n_1K_2$ (i.e. an induced matching of size $n_1$), or
\item the half split graph of height $n_2$.
\end{enumerate}
Specifically, $f(n,n_1,n_2)=2^{M+1}$ where $M=R(n_1+n,2n-1,n+n_2, n+n_2-1)$.
\end{proposition}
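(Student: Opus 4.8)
The plan is to start with a prime graph $G$ containing an independent set $I$ with $|I| = N = 2^{M+1}$, and use the largeness of $I$ together with primeness to locate one of the listed configurations. Since $G$ is prime and $I$ is a module's natural enemy --- no vertex can be non-mixed on too large a subset of $I$ without forcing structure --- the first step is to extract, for each vertex $v \notin I$, the bipartition of $I$ into its neighbors and non-neighbors in $I$. Primeness guarantees that for any two distinct $x, y \in I$ there is a vertex outside $I$ mixed on $\{x,y\}$ (otherwise $\{x,y\}$, or a larger set containing it, would be a module). Concretely I would fix an enumeration of $I$ and, working inside a large subset, repeatedly pass to a vertex $v \notin I$ separating a ``current'' pair; iterating this and using that $|I| = 2^{M+1}$ lets one pull out a sub-independent-set $I' \subseteq I$ of size $M$ together with vertices $u_1, \dots, u_{M}$ outside $I$ (or some similar bookkeeping) realizing a half-graph-like incidence pattern between $I'$ and the $u_i$. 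The exponential drop $2^{M+1} \mapsto M$ is exactly the cost of this ``find a separating vertex and split'' procedure, which is why the bound has the stated shape $f(n,n_1,n_2) = 2^{M+1}$.

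The second step is the Ramsey argument on the $u_i$'s, which is where $M = R(n_1 + n, 2n-1, n + n_2, n + n_2 - 1)$ enters. Having arranged (by relabeling along the half-graph order) vertices $u_1, \dots, u_M$ outside $I'$ and an ordered independent set $\{x_1, \dots, x_M\} \subseteq I'$ with $u_i$ adjacent to $x_j$ iff $i \le j$ (or the appropriate one-sided incidence), I would $4$-color the pairs $\{i, j\}$, $i < j$, according to the edge/non-edge relation between $u_i$ and $u_j$ crossed with whether the pattern is ``lower-triangular'' or ``diagonal'' type --- the four colors corresponding to the four configurations that survive: a monochromatic set of size $n_1 + n$ in the first color yields (after discarding at most $n$ vertices to fix up boundary incidences) both an induced matching $n_1 K_2$ and enough room for a spider; a set of size $2n-1$ in the second color yields $\overline{L(K_{2,n})}$; a set of size $n + n_2$ in the third yields the half split graph $H'_{n_2}$; and the fourth color of size $n + n_2 - 1$ yields the bipartite half-graph $H_n$. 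The precise matching of colors to outcomes is dictated by which adjacency pattern among $\{u_i, u_j, x_i, x_j\}$ each color records; I would organize this as a short case analysis, each case a direct verification that the induced subgraph on the selected $2k$ vertices is the claimed configuration.

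The main obstacle I anticipate is not the Ramsey step but the \emph{first} step: extracting a clean half-graph incidence between a large sub-independent-set and a large set of external vertices, using only primeness. One has to be careful that the separating vertices obtained at different stages do not interfere --- e.g., a vertex chosen to separate $\{x_i, x_j\}$ may be mixed in an uncontrolled way on the rest of $I$ --- so the iteration must simultaneously shrink the independent set \emph{and} freeze the adjacency of already-chosen external vertices to the surviving part of $I$. The standard device is to process $I$ as a tree of halvings: maintain a set $S$ of surviving independent vertices and at each stage pick a vertex mixed on $S$, use it to split $S$ into its neighbors and non-neighbors in $S$, keep the larger half, and record the vertex; after $\log_2 |I| = M+1$ steps one has $M+1$ recorded external vertices and a surviving set, from which the half-graph pattern (and its one-sided variants) can be read off directly. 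Once this structure is in hand, the remaining verifications that each Ramsey-homogeneous block induces the desired graph from the list are routine, so the real content is setting up this halving correctly and checking that the four colors exhaust the possibilities --- equivalently, that no fifth ``unavoidable'' configuration is hiding in the incidence pattern.
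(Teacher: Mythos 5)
You should first note where this statement sits in the paper: Proposition \ref{3.1} is quoted from \cite{CKOS} and is not reproved here; the paper's own substitute for it is Proposition \ref{new3.1}, in which the pairs with controlled one-sided adjacency are read off a branch of a type tree, so that only the four-colour Ramsey step remains. Your proposal instead reconstructs the original CKOS route, and its skeleton is the right one: primeness gives, for every $S\subseteq I$ with $|S|\ge 2$, a vertex mixed on $S$ (automatically outside $I$, since $I$ is independent); a halving iteration converts $|I|=2^{M+1}$ into roughly $M$ recorded external vertices $u_1,\dots,u_M$ together with witnesses from $I$; and a Ramsey argument with four colours and $M=R(n_1+n,2n-1,n+n_2,n+n_2-1)$ finishes by a case analysis. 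So the architecture and the explanation of the exponential bound are correct.

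Two points, however, are wrong as written and not merely imprecise. First, the extraction cannot yield ``$u_i$ adjacent to $x_j$ iff $i\le j$'': a large thin spider is prime and has a large independent set, yet contains neither the bipartite half-graph of height $3$ nor the half split graph of height $3$, and a full half-graph incidence of large height would (after one application of Ramsey to the $u_i$'s) force one of these; so no procedure can produce that pattern in general. What the halving actually controls is only the forward direction: at step $i$ you must record, besides $u_i$, a witness $x_i$ taken from the \emph{discarded} half, so that $u_i$'s adjacency to $x_i$ is opposite to its (constant) adjacency to the surviving set and hence to all later witnesses $x_j$, $j>i$; the backward adjacencies $u_jx_i$ for $i<j$ are uncontrolled and constitute one of the two bits of the pair-colouring (the other being $u_iu_j$). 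Your description ``keep the larger half and record the vertex'' records only $u_i$; each $u_i$ is homogeneous on the final surviving set, so nothing can be ``read off directly'' without the witnesses. Second, the per-index datum of \emph{which} side was kept (neighbours versus non-neighbours of $u_i$) is unary and cannot be encoded in an edge-colouring; it is handled by a pigeonhole inside each homogeneous set, and this, not ``discarding at most $n$ boundary vertices,'' is what produces the paired list sizes: for instance a homogeneous set of size $n_1+n$ with the $u_i$ pairwise non-adjacent and backward non-adjacent splits into $n_1$ indices giving $n_1K_2$ or $n$ indices giving $H_n$, and a set of size $2n-1$ splits into $H_n$ or $\overline{L(K_{2,n})}$. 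In particular, assigning both a matching and a spider to a single colour cannot be correct: the witnesses lie in $I$, so a spider's complete side must come from the $u_i$, which in any colour yielding a matching are pairwise non-adjacent. With the witness bookkeeping and this pigeonhole added, your plan becomes the standard CKOS proof.
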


\begin{proposition}[Proposition 4.1 from \cite{CKOS}]\label{4.1}
Let $t\geq 2$ and $n,n'$ be positive integers.  Let $h(n,n',2)=n$ and 
$$
h(n,n',i)=(n-1)R(n,n,n,n,n,n,n,n',n',h(n,n',i-1))+1
$$
for an integer $i>2$.  Let $v$ be a vertex of a graph $G$ and let $M$ be an induced matching of $G$  consisting of $h(n,n',t)$ edges not incident with $v$.  If for each edge $e=xy$ in $M$, there is a chain of length at most $t$ from $\{x,y\}$ to $v$, then $G$ has an induced subgraph isomorphic to one of the following:
\begin{enumerate}[(1)]
\item $K_{1,n}^{(1)}$,
\item the bipartite half-graph of height $n$,
\item $\overline{L(K_{2,n})}$,
\item a spider with $n$ legs, or
\item the half split graph of height $n'$.
\end{enumerate}
\end{proposition}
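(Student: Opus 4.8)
I would argue by induction on $t$. For the base case $t=2$, note that a chain $x,y,v$ of length $2$ from $\{x,y\}$ to $v$ exists exactly when $v$ is mixed on the edge $xy$, i.e.\ adjacent to exactly one of $x,y$; so for each of the $h(n,n',2)=n$ edges $x_iy_i$ of $M$ one may relabel so that $vx_i\in E(G)$ and $vy_i\notin E(G)$. Since $v\notin V(M)$ and $M$ is an induced matching, the $2n+1$ distinct vertices $v,x_1,\dots,x_n,y_1,\dots,y_n$ then induce precisely $K_{1,n}^{(1)}$ (centre $v$, subdivision vertices the $x_i$, leaves the $y_i$), which is outcome (1).

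For the inductive step, assume the statement for $t-1\geq 2$, put $R=R(n,n,n,n,n,n,n,n',n',h(n,n',t-1))$ so that $h(n,n',t)=(n-1)R+1$, and note $R\geq h(n,n',t-1)$. Given $M$ with $h(n,n',t)$ edges, I would split $M=A\cup B$, where $A$ is the set of edges admitting a chain of length $\leq t-1$ to $v$ and $B$ the set of edges all of whose chains to $v$ have length exactly $t$. If $|A|\geq h(n,n',t-1)$, the inductive hypothesis applied to $A$ (still an induced matching avoiding $v$, with chains of length $\leq t-1$ to $v$) already yields one of (1)--(5). Otherwise $|A|\leq h(n,n',t-1)-1\leq R-1$, so $|B|\geq (n-1)R+1-(R-1)=(n-2)R+2\geq R$ once $n\geq 3$. (The cases $n\leq 2$ are immediate: $M$ itself contains $\overline{L(K_{2,2})}=2K_2$ when $n=2$, and a $K_2=H_1$ when $n=1$.)

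To handle the long chains, for each $e_i=x_iy_i\in B$ I would fix a chain $x_i,y_i,c^i_2,\dots,c^i_{t-1},v$ and set $P_i=\{x_i,y_i,c^i_2,\dots,c^i_{t-1}\}$, recalling that $c^i_{t-1}$ is the unique neighbour, or the unique non-neighbour, of $v$ in $P_i$. Since $t$ is fixed there are only boundedly many possibilities for the isomorphism type of $G[P_i]$ together with the way $v$ attaches to $P_i$, so after a pigeonhole I may pass to a subcollection $B'$, $|B'|\geq R$, on which this data is constant (this pigeonhole, with one more below, is what the factor $n-1$ covers). Next, colour each pair $\{i,j\}\in {B'\choose 2}$ by the isomorphism type of $G[P_i\cup P_j]$ --- equivalently, since the within-$P_i$ data agree and $M$ is induced, by the cross-adjacency pattern of the two chains --- and apply Ramsey's theorem with the ten thresholds of $R$ to get a homogeneous set $S$. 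A case analysis on the colour of $S$ finishes the proof: in all colours but one, homogeneity of $S$, the induced order on $S$, and the prescribed attachment of $v$ to the vertices $c^i_{t-1}$ display one of the subgraphs in (1)--(5) on $\geq n$ (resp.\ $\geq n'$) indices of $S$ (``staircase'' colours giving the bipartite half-graph of height $n$ or the half split graph of height $n'$; ``diagonal'' colours giving a thin or thick spider with $n$ legs, or $\overline{L(K_{2,n})}$; the ``trivial'' colour giving $K_{1,n}^{(1)}$), while in the one remaining colour --- the one whose threshold in $R$ is $h(n,n',t-1)$ --- homogeneity lets me truncate the chains uniformly, so that $\{x_iy_i:i\in S\}$ is an induced matching of $h(n,n',t-1)$ edges each carrying a chain of length $\leq t-1$ to a single common vertex (either $v$, or one read off from the shared penultimate data), and the inductive hypothesis for $t-1$ finishes the argument.

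The hard part will be the last step. Organising the case analysis so that every Ramsey colour genuinely lands in (1)--(5) or in the recursive case, and --- crucially --- producing in the recursive case a \emph{single} common target vertex for the truncated chains so that the inductive hypothesis for $t-1$ applies verbatim, is where the real work lies. Pinning down the exact set of ten Ramsey thresholds and the exact role of the factor $n-1$ is then a matter of bookkeeping; by contrast the base case, the short/long dichotomy, and the small-$n$ cases are routine.
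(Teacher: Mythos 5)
Your skeleton (induction on $t$, base case $t=2$ yielding $K_{1,n}^{(1)}$, a Ramsey extraction on cross-adjacency data between chains, one colour feeding back into the induction) is the right shape, and it matches the argument that this paper only cites from \cite{CKOS} and mirrors in its infinitary Proposition \ref{infinite4.1}. But the recursive case as you describe it does not work. By your own definition of $B$, an edge $x_iy_i\in B$ has no chain of length $\leq t-1$ to $v$, so keeping the matching $\{x_iy_i: i\in S\}$ forces you to retarget the chains to ``a single common vertex \ldots read off from the shared penultimate data''; yet agreeing on the isomorphism type of $G[P_i]$ together with the attachment of $v$ only makes the penultimate vertices $c^i_{t-1}$ look alike, it does not make them equal, so in general there is no common target and the inductive hypothesis cannot be applied. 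The correct reduction goes the other way: take $z_i=v_2$, the first chain vertex beyond the edge (it is mixed on $x_iy_i$, hence lies outside $V(M)$ since $M$ is induced); in the surviving Ramsey colour the pairs $\{z_i,y_i\}$ (after relabeling so $z_iy_i\in E$, $z_ix_i\notin E$) form a new induced matching, and deleting $x_i$ from each old chain gives a chain of length $\leq t-1$ from $\{z_i,y_i\}$ to the \emph{same} vertex $v$. This is exactly the mechanism of Proposition \ref{infinite4.1}, and without it the induction has nothing legitimate to recurse on.

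The bookkeeping also cannot be postponed, because the proposition asserts a specific function $h$. Your pigeonhole over isomorphism types of $G[P_i]$ and your colouring of pairs by the type of $G[P_i\cup P_j]$ use a number of classes that grows roughly like $2^{t^2}$, whereas $h(n,n',t)=(n-1)R+1$ carries only the factor $n-1$ and a ten-argument Ramsey number; so even a repaired version of your argument would prove the statement only with a much larger function, i.e.\ a strictly weaker result (and the explicit bounds are the point of this paper). In the intended proof the factor $n-1$ plays a different role: either some single vertex is mixed on $n$ edges of $M$, which already gives $K_{1,n}^{(1)}$, or among $(n-1)R+1$ edges one can select $R$ whose vertices $z_i$ are pairwise distinct. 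That selection is also what guarantees the configurations produced by the Ramsey step are induced; your sketch never addresses coincidences among chain vertices of different edges (all chains end at $v$, and interior vertices may be shared), so homogeneity of the pair types alone does not hand you induced copies of the listed graphs.
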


\begin{proposition}[Proposition 5.1 of \cite{CKOS}]\label{5.1}
For every positive integer $n$, there exists 
$$
N=g(n)=4^{n-2}(n+1)+2(n-2)+1
$$
such that every prime graph having a half split graph of height at least $N$ as an induced subgraph contains a chain of length $n+1$ or an induced subgraph isomorphic to one of $H_{n,I}'$, $H_n^*$, $\overline{H^*_n}$.
\end{proposition}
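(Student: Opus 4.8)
The plan is to recover the argument of \cite{CKOS}, whose engine is the tension between primeness of $G$ and the fact that a half split graph is highly non-prime. Set $N=g(n)$ and let $S$ be an induced half split graph in $G$ of height $N$, with independent part $A=\{a_1,\dots,a_N\}$, complete part $B=\{b_1,\dots,b_N\}$, and $a_i\sim b_j\iff i\le j$. First I would record the non-primeness of $S$: for each $1\le k\le N-1$ the ``triangular'' set $M_k=\{a_1,\dots,a_k\}\cup\{b_1,\dots,b_k\}$ is a module of $S$ (every $a_m$ with $m>k$ misses all of $M_k$, every $b_j$ with $j>k$ sees all of $M_k$), and no vertex of $S\setminus M_k$ is mixed on $M_k$. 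Since $G$ is prime, $|M_k|\ge 2$, and $M_k\ne V(G)$ (note $b_N\notin M_k$; also $S\ne V(G)$ since $\{a_1,b_1\}$ is a nontrivial module of $S$, so $S$ is not prime), there is a vertex of $G\setminus S$ mixed on $M_k$. In particular, taking $k=1$, there is $w\notin S$ seeing exactly one of $a_1,b_1$: outside the half split graph there is always a vertex interacting nontrivially with its diagonal.

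Next, given a vertex $w\notin S$, I would examine how it attaches to the $a$'s and $b$'s of $S$, via the index sets $I_w=\{i:w\sim a_i\}$ and $J_w=\{j:w\sim b_j\}$. Each of the three target graphs arises from a height-$n$ half split subgraph of $S$ together with one extra vertex attached in a prescribed way: $H'_{n,I}$ when, for some $t_1<\dots<t_n$, $w$ is adjacent to every $a_{t_i}$ and to no $b_{t_i}$; $H_n^*$ when instead $w$ is adjacent to $a_{t_1}$ only among those $a$'s and to none of those $b$'s; and $\overline{H_n^*}$ from a copy of $\overline{H'_n}$ built from interleaved indices (using that $A$ is independent and $B$ complete in $G$) together with $w$ attached in the complementary fashion. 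If some vertex of $G\setminus S$ --- say one produced by the non-primeness step --- attaches in any of these ways, we are done; so I may assume none does. This pins down the attachment of every outside vertex to be entangled with the diagonal of $S$: roughly, such a $w$ has few $A$-neighbours, or its $A$- and $B$-neighbourhoods have overlapping tails.

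In that remaining situation I would build a chain from $I=A$. One starts with $v_0=a_{i'},\ v_1=a_i,\ v_2=b_i$ for $i'>i$ (indeed $b_i$ sees $a_i$ and misses $a_{i'}$, so $v_1$ is the unique neighbour of $v_2$ in $\{v_0,v_1\}$), and one can prepend or append a few further $b$'s; but since every new $b$ sees all earlier $b$'s, the chain cannot be extended within $S$ past a bounded length, and one must append vertices of $G\setminus S$. These are exactly what the non-primeness step supplies, and the entanglement constraint on such a vertex is precisely what forces it to see the previous chain vertex and miss the earlier ones, or vice versa. To control each appended outside vertex, one first passes to a sub-half split graph on a set of indices on which that vertex is uniform both on the remaining $a$'s and on the remaining $b$'s; this costs a bounded multiplicative factor (roughly $4$, i.e.\ two halvings) and a bounded additive loss for indices already used by the chain. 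Iterating this roughly $n-2$ times lengthens the chain to one of length $n+1$.

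Finally I would carry the height bookkeeping through the iteration --- about $n-2$ passes, each dividing the available height by roughly $4$ and deleting a couple of used indices --- which can be arranged to leave the stated value $g(n)=4^{n-2}(n+1)+2(n-2)+1$. The hard part is the third step: isolating an inductive invariant so that at every stage one simultaneously has a chain of the current length ending at a vertex of the current half split graph, still has a half split graph large enough to continue, and can always produce the next outside vertex with exactly the required attachment pattern. The detailed case analysis of how an entangled outside vertex meets the half split graph, together with the check that passing to the uniform subset preserves both the half split structure and the chain built so far, are the technical core of the argument, just as in \cite{CKOS}.
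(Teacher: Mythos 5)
This proposition is quoted by the paper from \cite{CKOS} and is not reproved here, so the only comparison available is with the original argument of \cite{CKOS}; at the level of strategy your outline does track it (primeness forces vertices outside the half split graph to be mixed on its modules; such a vertex either attaches so as to create $H'_{n,I}$, $H^*_n$ or $\overline{H^*_n}$ on a height-$n$ sub-half split graph, or it is used to grow a chain, at the cost of shrinking the half split graph by a bounded factor, and iterating $\approx n-2$ times gives the bound $g(n)$). Your preliminary observations are also sound: the sets $M_k=\{a_1,\dots,a_k\}\cup\{b_1,\dots,b_k\}$ are indeed modules of $S$, so primeness of $G$ supplies vertices of $G\setminus S$ mixed on them, and your identification of the attachment patterns producing $H'_{n,I}$ and $H^*_n$ is correct.

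However, there is a genuine gap: the entire content of the proposition lives in your ``third step,'' and you explicitly defer it rather than prove it. Concretely, you never establish the dichotomy that an outside vertex which does \emph{not} yield one of $H'_{n,I}$, $H^*_n$, $\overline{H^*_n}$ must attach to the current configuration in a way that extends the chain. Note that being mixed on a module $M_k$ is far weaker than what the chain definition demands, namely that the previous chain vertex be the \emph{unique} neighbor or unique non-neighbor of the new vertex among \emph{all} previously chosen chain vertices; bridging that gap requires the uniformization and case analysis you only gesture at (``the entanglement constraint \ldots is precisely what forces \ldots''), together with an inductive invariant guaranteeing simultaneously (a) a chain of the current length whose last vertex interacts correctly with the surviving half split graph, (b) a surviving half split graph of controlled height on which the process can be repeated, and (c) the availability of a fresh mixed vertex at the next stage. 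Without formulating and verifying that invariant, and without carrying out the height bookkeeping (your ``roughly a factor of $4$ plus two indices per step'' is asserted, not derived, and the stated value $g(n)=4^{n-2}(n+1)+2(n-2)+1$ is simply copied in), the proposal is a restatement of the difficulty rather than a proof. Also, the construction of $\overline{H^*_n}$ from ``interleaved indices'' needs an actual verification that the complement of a half split graph of height $N$ contains a half split graph of height roughly $N-1$ with the extra vertex attached in the complementary pattern; as written this is only a plausibility claim.
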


In the flow chart below, the bold boxes denote steps which involve Ramsey's theorem.  A box with no descendants indicates that the conclusion of the theorem is satisfied in that case.  In this chart, the functions $f$, $h$, and $g$ are from Propositions \ref{3.1}, \ref{4.1}, and \ref{5.1} respectively.

%MM
\newpage
%\vspace{5mm}

% Define block styles
\tikzstyle{decision} = [diamond, draw, fill=white!20, 
    text width=4.5em, text badly centered, node distance=3cm, inner sep=0pt]
\tikzstyle{block} = [rectangle, draw, fill=blue!20, 
    text width=5em, text centered, rounded corners, minimum height=4em]
\tikzstyle{line} = [draw, -latex']
\tikzstyle{cloud} = [draw, ellipse,fill=red!20, node distance=3cm,
    minimum height=2em]
\tikzstyle{box} = [rectangle, draw, fill=white!20, 
    text width=12em, text centered, minimum height=1em]
    \tikzstyle{thickbox} = [rectangle, draw, ultra thick, fill=white!20, 
    text width=12em, text centered, minimum height=1em]
    \tikzstyle{widebox} = [rectangle, draw, fill=white!20, 
    text width=20em, text centered, minimum height=1em]
     \tikzstyle{widethickbox} = [rectangle, draw, ultra thick, fill=white!20, 
    text width=20em, text centered, minimum height=1em]
      \tikzstyle{widishbox} = [rectangle, draw, fill=white!20, 
    text width=13em, text centered, minimum height=1em]
     \tikzstyle{widishthickbox} = [rectangle, draw, ultra thick, fill=white!20, 
    text width=13em, text centered, minimum height=1em]
    \tikzstyle{narrowbox} = [rectangle, draw, fill=white!20, 
    text width=5em, text centered, minimum height=1em]
      \tikzstyle{asterisk} = [circle, fill=white!20, 
    text width=.05em, text centered, minimum height=.05em]
    
\begin{tikzpicture}[node distance = 2cm, auto]
    % Place nodes
        \node[narrowbox](init){Start};
    \node [box, below right of=init,node distance=4cm] (yes chain) {There is a chain of length $n+1$.};
     \node [box, below right of=yes chain,node distance=3cm] (cor) {By Proposition \ref{cor2.3}, there is a chain of length $n$ inducing a prime subgraph.};
    \node [box, below left of=init,node distance=4cm] (no chain) {There is no chain of length $n+1$.};
    \node [widethickbox, below of=no chain, node distance=3cm] (ramsey) {Set $m=f(n,h(n,g(n),n),g(n))$, $N=R(m,m)$ and assume $G$ is a prime graph of size $N$.  By Ramsey's theorem, we may assume there is an independent set of size $m$ (else work with the dual).};
    \node [box, below right of=ramsey,node distance=4cm] (no half split) {There is no half split graph of height $g(n)$.};
    \node [box, below left of=ramsey,node distance=4cm] (yes half split) {There is a half split graph of height $g(n)$.};
    \node [box, below of=yes half split] (five point one) {Apply Proposition \ref{5.1}.};
    \node [widishthickbox, below of=no half split] (three point one) {Apply Proposition \ref{3.1} with $n=n$, $n_1=h(n,g(n),n)$ and $n_2=g(n)$.};
     \node [widebox, below right of=three point one,node distance=5cm] (four) {Outcome (4) of Proposition \ref{3.1}.  $G$ has an induced matching with $h(n,g(n),n)$ edges.  Since $G$ is prime, for every pair of points $\{x,y\}$ and every vertex $v$, there is a chain from $\{x,y\}$ to $v$.  Since $G$ has no chains of length $n+1$, all such chains have length at most $n$.  Therefore $G$ satisfies the hypotheses of Proposition \ref{4.1} with $n=n$, $n'=g(n)$, and $t=n$. };
      \node [box, below left of=three point one,node distance=5cm] (one two or three) {Outcome (1), (2), or (3) of Proposition \ref{3.1}.};
       \node [thickbox, below of=four, node distance=4cm] (four point one) {Apply Proposition \ref{4.1}.}; 
 
    % Draw edges
    \path [line] (init) -- (no chain);
    \path [line] (yes chain) -- (cor);
    \path [line] (init) -- (yes chain);
    \path [line] (no chain) -- (ramsey);
    \path [line] (ramsey) -- (no half split);
    \path [line] (ramsey) -- (yes half split);
    \path [line] (no half split) -- (three point one);
    \path [line] (yes half split) -- (five point one);
    \path [line] (three point one) -- (four);
     \path [line] (three point one) -- (one two or three);
      \path [line] (four) -- (four point one);
\end{tikzpicture}
%MM
%\vspace{5mm}

For the rest of the paper, given $n\geq 2$, let $N_{\ref{mainthm}}=N_{\ref{mainthm}}(n)$ be the bound obtained for Theorem \ref{mainthm} in \cite{CKOS}, that is, $N_{\ref{mainthm}}(n)=R(m,m)$ where $m=f(n,h(n,g(n),n),g(n))$.  

\begin{remark}\label{reduction}
Note this proof shows the following: a prime graph $G$ with an independent set of size $m$ and no chain of length $n+1$ satisfies the conclusion of the theorem.
\end{remark}

%************************************************************************
\section{Tree Lemma}

%************************************************************************
\setcounter{theorem}{0}
\numberwithin{theorem}{section}

%MM: modified this paragraph
In this section we prove a key lemma, Theorem \ref{3.5}, which allows us to improve the bounds in Theorem \ref{mainthm}. 
This lemma is \cite{MS} Theorem 3.5 tailored to the specific setting of graphs. 
\cite{MS} Theorem 3.5 handles arbitrary finite sets of formulas, and uses model-theoretic tools such as types and $R$-rank. 
The bounds there are computed in terms of several associated constants,  
including the VC-dimension which was used to bound the branching of the trees. 
For the purposes of the present argument, we give here a streamlined proof for the special case of graphs 
written with graph theorists in mind. Corollary \ref{treecormain} gives the bound in this case. 

We now state relevant versions of definitions and lemmas from \cite{MS}.

Recall that a \emph{tree} is a partial order $(P, \trianglelefteq)$ such that for each $p\in P$, the set $\{q\in P: p\triangleleft q\}$ is a well-order under $\trianglelefteq$.  Given an integer $n\geq 2$, define
\begin{align*}
2^{<n}=\bigcup_{i=0}^{n-1} \{0,1\}^i,
\end{align*}
where $\{0,1\}^0=\langle$ $\rangle$ is the \emph{empty string}, and for $i>0$, $\{0,1\}^i$ is the usual cartesian product.  This set has a natural tree structure given by $\eta \trianglelefteq \eta'$ if and only if $\eta=\langle$ $\rangle$ or $\eta$ is an initial segment of $\eta'$.  We will write $\eta \triangleleft \eta'$ to denote that $\eta \trianglelefteq \eta'$ and $\eta \neq \eta'$.  Given $\eta \in \{0,1\}^i$, let $|\eta|=i$ denote \emph{length} of $\eta$ (the length of the empty string $\langle$ $\rangle$ is $0$).  A main idea in the proof of Theorem \ref{3.5} is to take a graph $G=(V,E)$, and arrange $G$ into a tree by indexing its vertex set with elements of $2^{<n}$.  Suppose $G=(V,E)$ is a graph, and we have an indexing $V=\{a_{\eta}: \eta \in X\}$ of the vertices of $G$ by some $X\subseteq 2^{<n}$.  Given $\eta \in X$, we will say the \emph{height} of $a_{\eta}$, denoted $ht(a_{\eta})$ is $|\eta|$.  A \emph{branch} is a set of the form $\{a_{\eta}: \eta \in Y\}$ where $Y$ is a maximal collection of comparable elements in $X$.  The \emph{length} of a branch is its cardinality.  Given $\eta, \eta' \in 2^{<n}$ and elements $a_{\eta}, a_{\eta'}$ indexed by $\eta$ and $\eta'$, we say $a_{\eta}$ and $a_{\eta'}$ \emph{lie along the same branch} if $\eta \trianglelefteq \eta'$ or $\eta'\trianglelefteq \eta$.  If $\eta\triangleleft \eta'$, we say $a_{\eta}$ \emph{precedes} $a_{\eta'}$.  Given $\eta=\langle \eta_1,\ldots, \eta_i\rangle \in \{0,1\}^i$, set $\eta \wedge 0= \langle \eta_1,\ldots, \eta_i,0\rangle $ and $\eta \wedge 1= \langle \eta_1,\ldots, \eta_i,1\rangle $.  If $x=a_{\eta\wedge 0}$ or $x=a_{\eta \wedge 1}$, then we say $a_{\eta}$ is the \emph{immediate predecessor} of $x$ and write $pred(x)=a_{\eta}$.  We will also write $a_{\eta}\wedge i$ to mean $a_{\eta\wedge i}$.  Given $j\in \{0,1\}$ and $i\geq 1$, let $j^i$ denote the element of $\{0,1\}^i$ which has every coordinate equal to $j$.  

\begin{definition}
Given a graph $G=(V,E)$ on $n$ vertices and $A\subseteq 2^{<n}$, we say that an indexing $V=\{a_{\eta}: \eta \in A\}$ of $V$ by the elements of $A$ is a \emph{type tree}, if for each $\eta \in A$ the following holds.  
\begin{itemize}
\item If $\eta \wedge 0 \in A$, then $a_{\eta \wedge 0}$ is non-adjacent to $a_{\eta}$.  If $\eta\wedge 1\in A$, then  $a_{\eta \wedge 1}$ is adjacent to $a_{\eta}$.
\item If $\eta \wedge 0$ and $\eta \wedge 1$ are both in $A$, then for all $\eta'\triangleleft \eta$, $a_{\eta \wedge 1}$ is adjacent to $a_{\eta'}$ if and only if $a_{\eta \wedge 0}$ is adjacent to $a_{\eta'}$.
\end{itemize}
\end{definition}

This notion of type tree is a special case of the model theoretic notion of a type tree.  We believe for the purposes of this paper it is better to deal only with this special version for graphs.  For the general definition, see \cite{classification}.   

\begin{lemma}\label{index}
Every finite graph $G=(V,E)$ can be arranged into a type tree.
\end{lemma}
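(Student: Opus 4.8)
The plan is to build the type tree by a top-down recursion on $2^{<n}$, where $n=|V(G)|$, maintaining a single invariant strong enough to make both conditions in the definition of a type tree automatic. For each node $\eta$ that I decide to place in $A$ I will attach an auxiliary set $V_\eta\subseteq V$ with the properties that (i) $a_\eta\in V_\eta$; (ii) $V_\eta$ is \emph{homogeneous over the strict predecessors of $\eta$}, i.e.\ any two vertices of $V_\eta$ have the same adjacency relation to $a_{\eta'}$ for every $\eta'\triangleleft\eta$; and (iii) the sets fit together, $V_{\langle\,\rangle}=V$ and $V_\eta\setminus\{a_\eta\}=V_{\eta\wedge 0}\sqcup V_{\eta\wedge 1}$, where $\eta\wedge j$ is put into $A$ precisely when the corresponding set is nonempty. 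Morally, $V_\eta$ is the set of vertices whose ``type over the path to $\eta$'' is the one recorded by $\eta$, which is exactly the model-theoretic picture behind the notion of a type tree.

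The recursive step: given a nonempty $V_\eta$, pick any $a_\eta\in V_\eta$ and set
$$
V_{\eta\wedge 0}=\{v\in V_\eta\setminus\{a_\eta\}: v a_\eta\notin E\},\qquad
V_{\eta\wedge 1}=\{v\in V_\eta\setminus\{a_\eta\}: v a_\eta\in E\}.
$$
I would then read off the two bullets of the definition directly. For the first bullet: if $\eta\wedge 0\in A$ then $a_{\eta\wedge 0}\in V_{\eta\wedge 0}$, which by construction consists of non-neighbors of $a_\eta$, so $a_{\eta\wedge 0}$ is non-adjacent to $a_\eta$; symmetrically $a_{\eta\wedge 1}$ is adjacent to $a_\eta$. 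For the second bullet: if both children are in $A$, then $a_{\eta\wedge 0}$ and $a_{\eta\wedge 1}$ both lie in $V_\eta$, and invariant (ii) says $V_\eta$ is homogeneous over every $a_{\eta'}$ with $\eta'\triangleleft\eta$, which is exactly the required agreement. Preservation of the invariant is the only thing to check carefully: $V_{\eta\wedge j}\subseteq V_\eta$ retains homogeneity over the predecessors of $\eta$, and the split by adjacency to $a_\eta$ adds homogeneity over $a_\eta$ itself — and the strict predecessors of $\eta\wedge j$ are precisely the predecessors of $\eta$ together with $\eta$, so this is enough.

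Finally I would address termination and the fact that the construction yields an indexing of all of $V$ by a subset of $2^{<n}$. Since $|V_{\eta\wedge j}|\le|V_\eta|-1$, an induction on $|\eta|$ gives $|V_\eta|\le n-|\eta|$, so every node used has length at most $n-1$ and hence belongs to $2^{<n}$; and since $V_\eta\setminus\{a_\eta\}$ is partitioned among the two children and these sets strictly shrink along branches, each vertex of $V$ is selected as $a_\eta$ for exactly one $\eta$, giving $V=\{a_\eta:\eta\in A\}$ as required. I do not expect a genuine obstacle here; the one subtlety, worth stating explicitly, is that invariant (ii) must be phrased over the \emph{strict} predecessors $\eta'\triangleleft\eta$ (excluding $\eta$), which is what makes it legitimate to split at $\eta$ by adjacency to $a_\eta$ even though the two children disagree about $a_\eta$.
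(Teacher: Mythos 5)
Your proposal is correct and is essentially the paper's own argument: both build the tree by recursively choosing a representative $a_\eta$ from a cell of vertices and splitting the remainder of that cell by adjacency to $a_\eta$ (your $V_\eta$ are the paper's $X_\eta$), with the homogeneity-over-predecessors invariant you state explicitly being exactly what the paper summarizes as ``a type tree by construction.''
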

\begin{proof}
Suppose $|V|=n$.  We arrange the vertices of $G$ into a type tree indexed by a subset of $2^{<n}$.  
\begin{itemize}
\item Stage 1: Choose any element of $G$ to be $a_{\langle \rangle}$, and set $A_0=\{a_{\langle \rangle}\}$.  Set $X_1=N(a_{\langle \rangle})$ and $X_0=V\setminus (\{a_{\langle \rangle}\}\cup N(a_{\langle \rangle}))$.  Note $X_1, X_0$ partition $V\setminus A_0$.

\item Stage $m+1$.  Suppose we've defined elements in the tree up to height $m\geq 0$ and for each $0\leq i\leq m$, $A_i$ is the set vertices of height $i$.  Suppose further that we have a collection of sets of vertices $\{X_{\eta \wedge i}: \eta \in A_m, i\in \{0,1\}\}$ which partition $V\setminus \bigcup_{i=1}^{m}A_i$ and such that for each $\eta \in A_m$, $X_{\eta\wedge 1}\subseteq N(a_{\eta})$ and $X_{\eta\wedge 0}\subseteq V\setminus (N(a_{\eta})\cup \{a_{\eta}\})$.   Then for each $\eta \in A_m$ and $i\in \{0,1\}$, if $X_{\eta \wedge i}\neq \emptyset$, choose $a_{\eta\wedge i}$ to be any element of $X_{\eta \wedge i}$.  Define $A_{m+1}$ to be the set of these $a_{\eta\wedge i}$.  Now for each $a_{\nu}\in A_{m+1}$ and $i\in \{0,1\}$, set
\begin{align*}
X_{\nu \wedge 1}&=N(a_{\nu})\cap X_{\nu}\text{ and }\\
X_{\nu \wedge 0}&=(V\setminus (N(a_{\nu})\cup \{a_{\nu}\}))\cap X_{\nu}.
\end{align*}
By assumption, $\{X_{\nu}: \nu \in A_{m+1}\}$ is a partition of $V\setminus \bigcup_{i=1}^{m}A_i$, and by construction, for each $\nu \in A_{m+1}$, $\{X_{\nu\wedge 1}, X_{\nu\wedge 0}\}$ is a partition of $X_{\nu}\setminus A_{m+1}$.  Therefore, $\{X_{\nu\wedge i}: \nu \in A_{m+1}, i\in \{0,1\}\}$ is a partition of $V\setminus \bigcup_{i=1}^{m+1}A_i$.
\end{itemize}
All elements of $V$ will be chosen after at most $n$ steps.  So we obtain an indexing of $V$ by a subset of $2^{<n}$ which is a type tree by construction.
\end{proof}

\begin{definition}\label{rankandheight} Suppose $G=(V,E)$ is a finite graph.
\begin{enumerate}
\item The \emph{tree rank} of $G$, denoted $t(G)$, is the largest integer $t$ such that there is a subset $V'\subseteq V$ and an indexing $V'=\{a_{\eta}:\eta \in 2^{<t}\}$ which is a type tree (i.e. $V'$ is a full binary type tree of height $n$).
\item The \emph{tree height} of $G$, denoted $h(G)$, is the smallest integer $h$ such that every indexing of $V$ which is a type tree has a branch of length $h$.
\end{enumerate}
\end{definition}

\begin{lemma}\label{indset}
Suppose $t,h$ are integers, and $G=(V,E)$ is a finite graph with tree rank $t$ and tree height $h$.  Then $G$ contains a complete or independent set of size $\max\{t, h/2\}$.
\end{lemma}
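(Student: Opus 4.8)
The plan is to prove the two bounds $t(G) \le |G|$-type estimates separately: produce a complete or independent set of size $t$ directly from the tree rank, and produce a complete or independent set of size $h/2$ directly from the tree height. Since the larger of the two always works, this suffices.

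First I would handle the tree rank. By Definition~\ref{rankandheight}, there is $V'\subseteq V$ with an indexing $V'=\{a_\eta:\eta\in 2^{<t}\}$ that is a type tree. Consider the branch $\{a_{\langle\rangle}, a_{\langle 1\rangle}, a_{\langle 1,1\rangle},\ldots\}$ consisting of the $t$ vertices $a_{1^i}$ for $0\le i<t$ (here $1^0=\langle\rangle$). By the first bullet of the type tree definition, each $a_{\eta\wedge 1}$ is adjacent to its immediate predecessor $a_\eta$; iterating, and using the second bullet to propagate adjacency up the branch, I expect the whole set $\{a_{1^i}:0\le i<t\}$ to be complete. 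Symmetrically, the branch $\{a_{0^i}:0\le i<t\}$ should be independent, using the non-adjacency clause. Either way this yields a complete or independent set of size $t$. (The point of having \emph{both} the $1$-branch and the $0$-branch available is just that one of them is guaranteed by the structure; in fact both are, so there's no case split needed — just take, say, the $1$-branch for completeness.)

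Next, the tree height. Fix any type-tree indexing $V=\{a_\eta:\eta\in A\}$ of the full vertex set, guaranteed by Lemma~\ref{index}. By Definition~\ref{rankandheight}(2) this indexing has a branch $B=\{a_\eta:\eta\in Y\}$ of length at least $h$, where $Y$ is a maximal chain in $A$. Along this branch, at each step we go either to a $0$-child or a $1$-child. Among the $\ge h$ steps, at least $h/2$ of them are of the same type, say they are $1$-steps (the other case is dual). Let $W$ be the set of vertices $a_{\eta\wedge 1}$ occurring at these $\ge h/2$ many $1$-steps, together with, if needed, the appropriate endpoint adjustments — in any case $|W|\ge h/2$. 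I claim $W$ is complete: if $a_{\eta\wedge 1}$ and $a_{\eta'\wedge 1}$ are two such vertices with $\eta\wedge 1 \trianglelefteq \eta'$, then since $\eta\wedge 1 \trianglelefteq \eta' $ we have $\eta \triangleleft \eta'\wedge 1$ and $\eta\wedge 1 \trianglelefteq \eta'$; the type tree conditions force $a_{\eta'\wedge 1}$ to be adjacent to $a_\eta$ exactly when $a_{\eta'\wedge 0}$ is, and one uses the immediate-predecessor adjacency plus induction along the branch to conclude adjacency of $a_{\eta'\wedge 1}$ to $a_{\eta\wedge 1}$. Dually, the $0$-steps give an independent set. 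Hence $G$ has a complete or independent set of size at least $h/2$.

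The main obstacle I anticipate is making the ``propagate adjacency up the branch'' argument fully rigorous: the type tree only directly controls the relation between a vertex and its \emph{immediate} predecessor, plus a consistency condition between $0$- and $1$-siblings relative to strictly earlier nodes. Turning this into ``all the $1$-nodes along a branch are pairwise adjacent'' requires a careful induction — essentially showing that if $a_\mu$ precedes $a_\nu$ along a branch and $a_\nu$ is a $1$-child, then the adjacency of $a_\nu$ to $a_\mu$ is determined by the single bit recording whether $\mu$'s relevant coordinate is $1$, which one reads off from the branch. I would set this up as a short lemma: for $\mu \triangleleft \nu$ both on the branch $Y$, $a_\mu a_\nu \in E$ iff the coordinate of $\nu$ at position $|\mu|$ equals $1$. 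Given that, both the rank bound and the height bound fall out immediately. I do not expect the bookkeeping with branch endpoints (whether the last vertex of the branch is counted, off-by-one issues in ``$h/2$'') to cause real trouble; taking floors where needed is harmless since the statement only claims size $\max\{t, h/2\}$ and the applications downstream absorb such constants.
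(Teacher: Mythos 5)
Your tree-rank half is essentially the paper's argument: take one outermost branch of the full binary type tree of height $t$ (the paper uses the all-zeros branch and gets an independent set; your all-ones/complete version is the mirror image), and your proposed ``coordinate lemma'' --- for $\mu\triangleleft\nu$ on a branch, $a_\mu a_\nu\in E$ exactly when the branch leaves $\mu$ in the direction $1$ --- is precisely the property the paper's proof invokes when it says ``by definition of a standard type tree'' / ``by construction''. One caution: that lemma does not follow formally from the two bullets of the type-tree definition as literally stated (the second bullet only forces the two \emph{siblings} to agree about earlier nodes; it does not pin down their common value), so your ``short lemma'' really needs the hereditary reading coming from the construction in Lemma~\ref{index}; since the paper relies on the same reading, I do not count this against you.

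The genuine gap is in your tree-height half. You let $W$ be the vertices \emph{entered by} the majority-type steps, i.e.\ the children $a_{\eta\wedge 1}$ of the chosen $1$-steps, and claim $W$ is complete. Even granting the coordinate lemma, this fails: adjacency of a later branch vertex to $a_\mu$ is governed by the direction in which the branch \emph{leaves} $\mu$, not by the direction in which it entered $\mu$. For instance, on the branch $\langle\rangle,\ \langle 1\rangle,\ \langle 1,1\rangle,\ \langle 1,1,0\rangle,\ \langle 1,1,0,1\rangle$, the vertices $a_{\langle 1\rangle}$, $a_{\langle 1,1\rangle}$, $a_{\langle 1,1,0,1\rangle}$ are all entered by $1$-steps, but the branch leaves $\langle 1\rangle$ in direction $1$ and leaves $\langle 1,1\rangle$ in direction $0$, so by your own lemma $a_{\langle 1,1,0,1\rangle}$ is adjacent to $a_{\langle 1\rangle}$ and non-adjacent to $a_{\langle 1,1\rangle}$: $W$ is neither complete nor independent, and your sketched induction (which only compares $a_{\eta'\wedge 1}$ with $a_{\eta'\wedge 0}$) cannot repair this. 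The fix is to attach each step to its \emph{parent}: the branch vertices whose outgoing step is $1$ are pairwise adjacent, those whose outgoing step is $0$ (together with the terminal vertex $a_\tau$) are pairwise non-adjacent, and one of these two classes has at least $h/2$ elements. Equivalently --- and this is exactly what the paper does --- split the branch $J$ according to adjacency to its last element $a_\tau$ and take the larger side. With that one re-indexing your proof becomes correct and coincides with the paper's.
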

\begin{proof}
By definition of tree rank, there is $V'\subseteq V$ and an indexing $V'=\{a_{\eta}: \eta \in 2^{<t}\}$ which is a type tree.  Then by definition of a standard type tree, $I_1=\{a_{<>}, a_0, \ldots, a_{0^{t-1}}\}$ is an independent set of size $t$.  On the other hand, by definition of tree height and Lemma \ref{index}, there is an indexing $V=\{a_{\eta}: \eta \in B\}$ of $V$ by a subset $B\subseteq 2^{<n}$ which is a standard type tree and which contains a branch $J$ with length $h$.  Let $a_{\tau}$ be the last element of $J$ and note $h=ht(a_{\tau})$.  If  $|N(a_{\tau})\cap J|\geq \frac{|J|}{2}$, set $I_2 = N(a_{\tau})\cap J$.  Otherwise set $I_2 = (V\setminus N(a_{\tau}))\cap J$.  In either case, $|I_2|\geq |J|/2=h/2$.  We now show that $I_2$ is complete or independent.  Suppose $x$ and $y$ are elements of $I_2$.  By definition of $I_2$, $a_{\tau}$ is adjacent to $x$ if and only if $a_{\tau}$ is adjacent to $y$.  Note $x$ and $y$ lie along the same branch, so without loss of generality we may assume $x$ precedes $y$. By construction, $a_{\tau}$ is adjacent to $x$ if and only if $y$ is adjacent to $x$.  So if $I_2=N(a_{\tau})\cap J$, $I_2$ must be a complete set, and if $I_2=(V\setminus N(a_{\tau}))\cap J$, $I_2$ must be an independent set.  We've now shown $G$ contains a complete or independent set of size $\max\{|I_1|,|I_2|\}\geq \max\{t, h/2\}$.
\end{proof}

\begin{definition} Suppose $G=(V,E)$ is a graph, $A\subseteq 2^{<n}$, and $V=\{a_{\eta}: \eta \in A\}$ is a type tree.
\begin{enumerate}
\item Given an element $a_{\eta}\in V$, we say there is a \emph{full binary tree of height $k$ below $a_{\eta}$} if the following holds.  There is a set $V'\subseteq \{a_{\sigma}: a_{\eta}\subseteq a_{\sigma}\}$ and a bijection $f: V'\rightarrow 2^{<k}$ with the property that $a_{\sigma}$ precedes $a_{\sigma'}$ in $V'$ if and only if $f(a_{\sigma})\triangleleft f(a_{\sigma'})$ in $2^{<k}$.  
\item The \emph{tree rank} of an element $a_{\eta}\in V$, denoted $t(a_{\eta})$, is the largest $k$ such that there is a full binary tree of height $k$ below $a_{\eta}$.
\end{enumerate}
\end{definition}

\begin{theorem}\label{3.5}
Suppose $n\geq2$ is an integer and $G=(V,E)$ is a graph of size $n$.  Then 
$$
h(G)\geq \frac{(n/t(G))^{\frac{1}{t(G)+1}}}{2}.
$$
\end{theorem}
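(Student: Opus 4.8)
The inequality relates the global tree height $h(G)$ to the tree rank $t(G)$ and the size $n$. The natural approach is a counting/branching argument on a single type tree: fix any type tree indexing $V = \{a_\eta : \eta \in A\}$ with $A \subseteq 2^{<n}$, let $t = t(G)$, and use the fact that \emph{no element has a full binary tree of height $t+1$ below it} (since $t(a_\eta) \le t(G) = t$ for every $\eta$). From this local bound one wants to deduce that $A$ cannot have too many elements unless it has a long branch; contrapositively, if the longest branch has length $h := h(G)$, then $|A| = n$ forces $h$ to be large, giving the stated bound.

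\textbf{Key steps.} First I would set up the recursion. For a type tree $T$ in which every node has tree rank $\le k$ and every branch has length $\le h$, let $N(k,h)$ denote the maximum possible number of nodes. The base cases are $N(0,h) = 0$ (rank $0$ means no full binary tree of height $1$, i.e.\ no node at all below — actually a single node, so one must be careful here: $N(k, 0) = 0$ and $N(0, h)$ should be small) and I would track the exact conventions so the arithmetic closes. The inductive step: look at the top node $a_{\langle\rangle}$ and walk down the ``spine'' of nodes that have two children. Between consecutive branching nodes the tree is a path, contributing at most $h$ nodes per path-segment; and there can be at most $t$ branching nodes along any root-to-leaf route before one would build a full binary tree of height $t+1$ below the root. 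This gives a bound of the shape $N(t,h) \le (\text{something like } h)^{t} \cdot h$ or, more carefully, $N(t,h) \le h^{t+1}$ (up to constants), because the branching nodes form a binary tree of depth $\le t$, hence $\le 2^t$ leaves, each ``leaf block'' a path of length $\le h$; refining, the total is at most $\sum$ over the $\le 2^t$ branching-paths, but the cleaner bound uses that the full structure is governed by $t$ and $h$ multiplicatively. I would prove $n = |A| \le h^{t+1}$ (possibly with a constant), then solve for $h$: $h \ge n^{1/(t+1)}$, and the factor of $2$ in the denominator absorbs whatever constant and rounding the branching count introduces, together with the slack in passing from ``a branch of length $h$ exists in \emph{every} type tree'' to ``this particular type tree has one''.

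\textbf{Main obstacle.} The delicate point is the exact form of the counting lemma and making sure the constants work out to leave only a factor of $2$. The clean statement ``a tree with all node-ranks $\le t$ and all branches $\le h$ has $\le h^{t+1}$ nodes'' needs an honest induction: removing the top branching node splits the tree into the path above it (length $\le h$) and two subtrees, each with node-ranks $\le t-1$ \emph{below the branch point} — but one must verify that the rank genuinely drops by passing to a subtree rooted at a child of a branching node, which is exactly where the definition of $t(a_\eta)$ via full binary trees below $a_\eta$ is used. I expect the recursion $N(t,h) \le h + 2\,N(t-1,h)$ or $N(t,h) \le h\cdot N(t-1,h)$ depending on how the path-segments are apportioned; the latter gives $N(t,h) \le h^{t}\cdot N(0,h)$ and with $N(0,h) \le h$ one gets $h^{t+1}$. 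Reconciling this with the $1/2$ in the theorem (rather than some other explicit constant) is the bookkeeping I would be most careful about; everything else is a routine induction using Lemma~\ref{index} to guarantee a type tree exists and Definition~\ref{rankandheight} to translate between $t(G)$, $h(G)$, and the node-level quantities.
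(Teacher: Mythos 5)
Your high-level plan is the same as the paper's: fix a type tree, show that bounded node-rank plus bounded branch length forces few vertices (roughly $n\le (2h)^{t+1}$ up to a factor of $t$), and solve for $h$. But the two structural claims you lean on are not correct, and the first recursion you propose is actually false. First, ``at most $t$ branching nodes along any root-to-leaf route'' fails: in a caterpillar-type tree (a chain of length $h$ in which every node has one extra leaf child) every node on the chain is a branching node, yet the tree rank is $2$, since no two incomparable nodes each have two incomparable descendants. Second, at a branching node of rank $t$ it is \emph{not} true that both child subtrees have rank $\le t-1$; the correct local fact is that at most \emph{one} child can have rank $t$ (if both did, the node together with a copy of $2^{<t}$ below each child would give a copy of $2^{<t+1}$, forcing rank $\ge t+1$). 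Consequently the recursion $N(t,h)\le h+2N(t-1,h)$ is false: it would give $N(2,h)=O(h)$, whereas a rank-$2$ tree consisting of a spine of length $h$ with a bare path hanging off each spine node has on the order of $h^2$ vertices.

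The repair is exactly the one-child-keeps-the-rank fact above: following, from each rank-$s$ node, its unique rank-$s$ child yields a spine of length $\le h$, off which hang at most $h$ subtrees whose roots have rank $\le s-1$, giving $N(s,h)\le h\bigl(1+N(s-1,h)\bigr)$ with base case $N(1,h)\le h$ (under the paper's definition every node has rank $\ge 1$, and a rank-$1$ subtree is a chain, which also settles your worry about the rank-$0$ base case). This yields $n\le h+h^2+\cdots+h^{t+1}$ and hence the theorem, with room to spare for the factor $2$. This is in substance what the paper's proof does, only organized level-by-level rather than by subtree recursion: its items (i)--(iii) say precisely that each rank-$s$ node has at most one rank-$s$ child and each rank-$(s{+}1)$ node at most two rank-$s$ children, giving $N^s_{\ell+1}\le N^s_\ell+2N^{s+1}_\ell$, the estimate $N^{t-s}_{\ell+1}\le(2(\ell+1))^s$, and finally $n\le t(2h)^{t+1}$. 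So your approach is salvageable and would even give a slightly cleaner bound, but as written the key combinatorial step is the gap.
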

\begin{proof} Suppose $A\subseteq 2^{<n}$ and $V=\{a_{\eta}: \eta \in A\}$ of $V$ is a type tree.  Let $h$ be the length of the longest branch in this tree, and let $t=\max\{t(a_{\eta}): \eta \in A\}$.  Note $t\leq t(G)$.  Given a fixed $\ell$ and $s$, set
\begin{align*}
Z_{\ell}^s&=\{a_{\eta}\in V: t(a_{\eta})=s, ht(a_{\eta})=\ell\}\\
X_{\ell}^s&=\{a_{\eta}\in Z_{\ell}^s: t(p(a_{\eta}))=s\},\text{ and }\\
Y_{\ell}^s&=\{a_{\eta}\in Z_{\ell}^s: t(p(a_{\eta}))=s+1\}.
\end{align*}
Let $N_{\ell}^s=|Z_{\ell}^s|$, $x_{\ell}^s=|X_{\ell}^s|$ and $y_{\ell}^s=|Y_{\ell}^s|$.  Then note that that for each $s$ and $\ell$, $N^s_{\ell}=x_{\ell}^s+y_{\ell}^s$, and $n=\sum_{\ell=0}^h \sum_{s=0}^{t} N_{\ell}^s$.  We claim the following facts hold.
\begin{enumerate}[(i)]
\item For all $s\leq t$ and $\ell$, $x_{\ell+1}^s\leq N_{\ell}^s$.
\item For all $s<t$ and all $\ell$, $y^s_{\ell+1}\leq 2N_{\ell}^{s+1}$.
\item For all $s<t$ and all $\ell$, $N_{\ell+1}^s\leq N_{\ell}^s+2N_{\ell}^{s+1}$.
\item For all $1\leq s\leq t$, $N_0^{t-s}=0$.
\item For all $\ell$, $N^t_{\ell}\leq 1$.
\item For all $0\leq s\leq t$, $N_1^{t-s}\leq 2$.  
\end{enumerate}
Item (i) holds by definition.  Item (ii) follows because every element has at most $2$ successors.  Item (iii) follows directly from (i), (ii) and the fact that for all $s$ and $\ell$, $N^s_{\ell}=x_{\ell}^s+y_{\ell}^s$.  Item (iv) follows from the fact that the only element of height $0$ is $a_{<>}$, which has height $t$.  Item (v) follows from the fact that if for some $\ell$, if $N_{\ell}^t \geq 2$, then we would have $t(a_{\langle \rangle})\geq t+1$.  Item (vi) is because the tree is binary, so the second level can have at most two elements.

We now show that for each $0\leq s\leq t$ and $0\leq \ell <h$, $N_{\ell+1}^{t-s}\leq (2(\ell+1))^{s}$.  If $s=0$ this follows immediately from (v).

Case $s=1$: We want to show for all $0\leq \ell <h$, $N_{\ell+1}^{t-1}\leq (2(\ell+1))^{s}$.  The case where $\ell =0$ is done by (vi).  Let $\ell> 0$ and suppose by induction $N_{\ell}^{t-1}\leq 2 \ell$.  By (iii), (v) and our induction hypothesis,
$$
N_{\ell+1}^{t-1}\leq N_{\ell}^{t-1}+2N_{\ell}^t \leq 2\ell+2=2(\ell+1).
$$

Case $s>1$: Suppose by induction that for all $0\leq s'< s$, the following holds: for all $0\leq \ell<h$, $N_{\ell+1}^{t-s'}\leq (2(\ell+1))^{s'}$. We want to show that for all $0\leq \ell <h$, $N_{\ell+1}^{t-s}\leq (2(\ell+1))^{s}$.  The case $\ell=0$ is done by (vi).  Let $\ell >0$ and suppose by induction that for all $0\leq \ell'<\ell$, $N_{\ell'+1}^{t-s}\leq (2(\ell'+1))^{s}$.  Then by (iii) and our induction hypothesis,
$$
N_{\ell+1}^{t-s}\leq N_{\ell}^{t-s}+2N_{\ell}^{t-s+1} \leq (2\ell)^{s}+2(2\ell)^{s-1}=(2\ell)^s\Big(\frac{\ell+1}{\ell}\Big)\leq (2(\ell+1))^{s}.
$$

Therefore, for all $0\leq \ell <h$,  
$$
N_{\ell+1}\leq \sum_{0\leq s\leq t}N_{\ell+1}^s\leq \sum_{0\leq s\leq t}(2(\ell+1))^{s} \leq t(2(\ell+1))^{t}\leq t(2h)^t.
$$
This implies that
$$
n=N_0+\sum_{0\leq \ell<h}N_{\ell+1} \leq 1+\sum_{0\leq \ell<h}t(2h)^t \leq t(2h)^{t+1}
$$

Rearranging this we obtain that
$$
\frac{(n/t)^{\frac{1}{t+1}}}{2}\leq h.
$$
Since $t\leq t(G)$ this implies $\frac{(n/t(G))^{\frac{1}{t(G)+1}}}{2}\leq h$.  This finishes the proof.
\end{proof}

\noindent Combining Theorem \ref{3.5} and Lemma \ref{indset} immediately implies the following.
\begin{corollary}\label{treecormain}
Suppose $G=(V,E)$ is a graph with tree rank $t$ and $n$ vertices.  Then $G$ contains a complete or independent set of size at least $\frac{(n/t)^{\frac{1}{t+1}}}{4}$.
\end{corollary}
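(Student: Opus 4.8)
The plan is to combine the two preceding results, Theorem~\ref{3.5} and Lemma~\ref{indset}, and then absorb a constant. Recall Lemma~\ref{indset} states that a finite graph $G$ with tree rank $t$ and tree height $h$ contains a complete or independent set of size $\max\{t, h/2\}$, and in particular of size at least $h/2$. So it suffices to produce a lower bound on the tree height $h(G)$ in terms of $n$ and $t=t(G)$, and that is exactly what Theorem~\ref{3.5} supplies: $h(G) \geq \frac{(n/t(G))^{1/(t(G)+1)}}{2}$.

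First I would note that by Lemma~\ref{indset}, $G$ contains a complete or independent set of size at least $h(G)/2$. Then I would substitute the bound from Theorem~\ref{3.5} into this, giving a complete or independent set of size at least
\[
\frac{h(G)}{2} \;\geq\; \frac{1}{2}\cdot \frac{(n/t)^{\frac{1}{t+1}}}{2} \;=\; \frac{(n/t)^{\frac{1}{t+1}}}{4},
\]
which is precisely the claimed bound. Strictly speaking I should check that Theorem~\ref{3.5} applies, i.e.\ that $n \geq 2$; if $n = 1$ the statement is trivial since a single vertex is both complete and independent, and one can also observe the claimed quantity is at most $1$ in that degenerate range, so there is nothing to prove. (One might also want to confirm that $t(G) \geq 1$ so the exponent and the division by $t$ make sense, which holds for any graph with at least one vertex since the root alone is a type tree of height $1$.)

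I do not expect any real obstacle here: the corollary is an immediate formal consequence of the two cited results, and the only content is the arithmetic of plugging one inequality into the other and noting $\frac12 \cdot \frac12 = \frac14$. The ``hard part,'' such as it is, was already done in proving Theorem~\ref{3.5} (the counting argument stratifying vertices by height $\ell$ and tree rank $s$) and Lemma~\ref{indset} (extracting the homogeneous set along a long branch using the type-tree property). So the write-up should be a single short paragraph: invoke Lemma~\ref{indset} to get size $\geq h(G)/2$, invoke Theorem~\ref{3.5} to bound $h(G)$ from below, and combine. I would keep it to two or three lines, possibly even folding it into the remark that the corollary follows ``immediately'' as the excerpt already announces.
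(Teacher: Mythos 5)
Your proposal is correct and is exactly the argument the paper intends: the corollary is stated as an immediate consequence of Lemma \ref{indset} (homogeneous set of size at least $h(G)/2$) combined with the lower bound on $h(G)$ from Theorem \ref{3.5}, yielding $\frac{(n/t)^{\frac{1}{t+1}}}{4}$. The only difference is that you spell out the trivial edge cases ($n=1$, $t\geq 1$), which the paper omits.
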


%\begin{remark}
%MM: moved this remark to the beginning of the section. 
%\end{remark}

%***********************************************************************************************************************
\section{Finitary proof leveraging Theorem \ref{3.5}}\label{finitary}

%***********************************************************************************************************************
The following is an adaptation of Proposition 3.1 \cite{CKOS}.
\begin{proposition}\label{new3.1}
Suppose $G=(V,E)$ has tree height $t\geq R(n_1, n,n,n_2)$ witnessed by $T\subseteq V$ and the indexing $T=\{a_{\eta}: \eta \in 2^{<t}\}$ which is a type tree.  Then $G[T]$ contains one of the following as an induced subgraph.
\begin{enumerate}[(i)]
\item a thin spider with $n$ legs,
\item the bipartite half-graph of height $n$,
\item the disjoint union of $n_1$ copies of $K_2$, denoted by $n_1K_2$, or
\item the half split graph of height $n_2$.
\end{enumerate}
\end{proposition}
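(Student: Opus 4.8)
The plan is to read off, along the leftmost branch of the given full binary type tree $T=\{a_\eta:\eta\in 2^{<t}\}$, a sequence of ``pivot pairs,'' to observe that the definition of a type tree already fixes all but two binary features of how these pairs interact, and then to apply Ramsey's theorem to those two remaining features in such a way that each of the four homogeneous patterns it can produce is one of the configurations (i)--(iv).

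Concretely, for $0\le i\le t-1$ put $c_i=a_{0^i}$, and for $0\le i\le t-2$ put $b_i=a_{0^i1}$, so that $b_i$ is the ``$1$-child'' and $c_{i+1}$ the ``$0$-child'' of $c_i$. The first task is to pin down the adjacencies already forced by the two type-tree axioms: (a) $c_i$ is adjacent to $b_i$ for every $i$, from the first axiom; (b) the leftmost branch $\{c_0,\dots,c_{t-1}\}$ is an independent set (this is exactly the property exploited in the proof of Lemma \ref{indset}, built into the standard type tree produced by Lemma \ref{index}); and (c) for $i<j$ the vertex $c_i$ is non-adjacent to $b_j$, because by the second axiom $b_j$ and $c_{j+1}$ have the same neighbours among $\{c_0,\dots,c_{j-1}\}\ni c_i$, while $c_{j+1}$ is non-adjacent to $c_i$ by (b). Consequently, among the vertices $c_i,b_i$ the only adjacencies not yet determined are, for $i<j$, whether $b_i$ is adjacent to $c_j$ and whether $b_i$ is adjacent to $b_j$ (writing $x\sim y$ for adjacency).

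Next I would $4$-colour the pivot pairs: colour $\{i,j\}$ (with $i<j$) according to the pair of truth values ``$b_i\sim c_j$'' and ``$b_i\sim b_j$.'' Assign to the colour ``$b_i\not\sim c_j$, $b_i\not\sim b_j$'' the target size $n_1$, to the two ``mixed'' colours the target size $n$, and to the colour ``$b_i\sim c_j$, $b_i\sim b_j$'' the target size $n_2$. Since $t\ge R(n_1,n,n,n_2)$, Ramsey's theorem yields a monochromatic index set $S$ of the corresponding target size; putting $A=\{c_i:i\in S\}$ and $B=\{b_i:i\in S\}$, the set $A$ is independent by (b), and a short check in each colour finishes the argument:
\begin{enumerate}[$\bullet$]
\item colour ``$b_i\not\sim c_j$, $b_i\not\sim b_j$'': then $B$ is independent and, by (a), (c) and the colour (for $i>j$ one reads the colour of $\{j,i\}$), $c_i$ is adjacent to $b_j$ if and only if $i=j$; so $G[A\cup B]\cong |S|K_2$, outcome (iii);
\item colour ``$b_i\not\sim c_j$, $b_i\sim b_j$'': then $B$ is complete and $c_i$ is adjacent to $b_j$ if and only if $i=j$; so $G[A\cup B]$ is the thin spider with $|S|$ legs, outcome (i);
\item colour ``$b_i\sim c_j$, $b_i\not\sim b_j$'': then $B$ is independent and $c_i$ is adjacent to $b_j$ if and only if $i\ge j$; so, reversing the order on $S$, $G[A\cup B]$ is the bipartite half-graph of height $|S|$, outcome (ii);
\item colour ``$b_i\sim c_j$, $b_i\sim b_j$'': then $B$ is complete and $c_i$ is adjacent to $b_j$ if and only if $i\ge j$; so, reversing the order on $S$, $G[A\cup B]$ is the half split graph of height $|S|$, outcome (iv).
\end{enumerate}
In each case $|S|$ is at least the relevant one of $n_1,n,n_2$, so $G[T]$ contains the stated configuration.

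I expect the first step to carry the real content. One must be scrupulous about which adjacencies the two axioms genuinely force, and the linchpin is the independence of the leftmost branch --- the one place where the standard form of the type tree (Lemma \ref{index}) has to be invoked, since the two axioms alone do not force it --- together with the symmetric care needed when matching the ``$i\ge j$'' patterns to the bipartite half-graph and the half split graph exactly as defined in Section~2. Once the forced adjacencies are correctly catalogued, the Ramsey step and the four-way case analysis are routine bookkeeping; the only other minor point is a harmless $\pm 1$ in counting pivot pairs against $R(n_1,n,n,n_2)$.
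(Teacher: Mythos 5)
Your proposal is correct and is essentially the paper's own proof: you take the leftmost branch together with its $1$-children, record the adjacencies forced by the type-tree structure ($A$ independent, $c_i\sim b_i$, $c_i\not\sim b_j$ for $i<j$), four-colour the remaining two undetermined adjacencies, and apply Ramsey with sizes $(n_1,n,n,n_2)$ to land in exactly the same four outcomes. If anything, you are more explicit than the paper about which adjacencies follow from the two axioms versus from the standard construction of Lemma \ref{index}, and the $\pm 1$ in the number of pivot pairs you flag is a harmless imprecision present in the original argument as well.
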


\begin{proof}
Consider the sets $A=\{a_{<>}, a_0,\ldots a_{0^{t-1}}\}$ and $B=\{a_1, a_{01}, \ldots, a_{0^{t-1}\wedge 1} \}$.  Rename the elements of $A$ and $B$ so that $\langle a_{<>}, a_0,\ldots, a_{0^{t-1}}\rangle = \langle x_1,x_2,\ldots, x_t\rangle$ and $\langle a_1, a_{01}, \ldots, a_{0^{t-1}\wedge 1}\rangle =\langle y_1,y_2,\ldots, y_t\rangle$.  Note that by definition of a standard type tree and our choice of $A$, we have the following.
\begin{itemize}
\item $A$ is an independent set.
\item For each $i\in [t]$, $x_iy_i\in E$.
\item  For each $i<j$, $x_iy_j\notin E$.  
\end{itemize}
We now define a coloring of the edges of the complete graph with vertex set $[t]$ with colors $(a,b)\in \{0,1\}^2$.  Given $i<j\in [t]$, define the color $(a,b)$ of the edge $ij$ as follows.  Set $a=1$ if and only if $x_jy_i\in E$ and $b=1$ if and only if $y_iy_j\in E$.  By Ramsey's theorem, there is a subset $I\subseteq [t]$ such that all the edges of $I$ have the same color $(a,b)$ and the following holds.
\[
|I|=\begin{cases}
n_1 & \text{ if }(a,b)=(0,0)\\
n & \text{ if }(a,b)=(0,1)\\
n & \text{ if }(a,b)=(1,0)\\
n_2 & \text{ if }(a,b)=(1,1)
\end{cases}
\]
Set $Z=\{x_i: i\in I\}\cup \{y_i: i\in I\}$.  Then if $(a,b)=(0,0)$, $G[Z]$ forms an induced copy of $n_1K_2$.  If $(a,b)=(0,1)$, then $G[Z]$ forms an induced copy of a thin spider with $n$ legs.  If $(a,b)=(1,0)$, then $G[Z]$ forms an induced copy of a bipartite half-graph of height $n$.  Finally if $(a,b)=(1,1)$, then $G[Z]$ forms an induced copy of the half split graph of height $n_2$.
\end{proof}

\begin{remark}
\vspace{.1mm}
\begin{enumerate}
\item In the proof of Proposition \ref{new3.1}, we could also have built our configuration over a complete set by instead taking $A=\{a_{<>}, a_1, a_{11}, \ldots, a_{1^{t-1}}\}$ and $B=\{a_0, a_{10}, \ldots, a_{1^{t-1}\wedge 0}\}$.
\item If we don't care whether we build over complete or empty sets, then what Proposition \ref{new3.1} uses is the length of the longest ``straight path" through the tree consisting of nodes with two children, which is at least the tree rank.
\end{enumerate}
\end{remark}

\begin{corollary}\label{treecor}
Suppose $G$ is a prime graph with tree height $t\geq R(h(n,g(n),n), n,n,g(n))$.  Then $G$ contains one of the following or the compliment of one of the following as an induced subgraph. 
\begin{enumerate}[(1)]
\item The $1$-subdivision of $K_{1,n}$ (denoted by $K_{1,n}^{(1)}$). 
\item The line graph of $K_{2,n}$ (denoted by $L(K_{2,n})$).
\item The thin spider with $n$ legs.
\item The bipartite half-graph of height $n$.
\item The graph $H'_{n,I}$.
\item the graph $H_n^*$.
\item A prime graph induced by a chain of length $n$.
\end{enumerate}
\end{corollary}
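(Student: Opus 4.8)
The plan is to run the case analysis of the flow chart in Section~\ref{oldproofoutline}, but with Proposition~\ref{new3.1} taking the place of the step that first extracts a large independent set and then applies Proposition~\ref{3.1}. We may assume $n\geq 3$, as in Theorem~\ref{mainthm}. First split on whether $G$ has a chain of length $n+1$: if it does, then since $n+1>3$, Proposition~\ref{cor2.3} gives a chain of length $n$ inducing a prime subgraph, which is outcome~(7), and we are done. So for the remainder assume $G$ has no chain of length $n+1$; since every initial segment of length $\geq 2$ of a chain is again a chain, $G$ then has no chain of length greater than $n$ at all. This last reduction is the only use we will make of the case assumption.

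Next, apply Proposition~\ref{new3.1} with $n_1=h(n,g(n),n)$ and $n_2=g(n)$; the hypothesis $t\geq R(h(n,g(n),n),n,n,g(n))=R(n_1,n,n,n_2)$ is exactly what we are given. If the output is a thin spider with $n$ legs or a bipartite half-graph of height $n$, this is outcome~(3) or~(4). If the output is the half split graph of height $g(n)$, apply Proposition~\ref{5.1} with parameter $n$, whose threshold is precisely $g(n)$, using that $G$ is prime: then $G$ contains a chain of length $n+1$, which is impossible by our standing assumption, or one of $H'_{n,I}$, $H_n^*$, $\overline{H_n^*}$, giving outcome~(5), outcome~(6), or the complement of outcome~(6).

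The one remaining output of Proposition~\ref{new3.1} is an induced matching $M$ with $h(n,g(n),n)$ edges, living inside the type tree $T$ from that proposition. Since $T$ has $2^t-1$ vertices, far more than the $2h(n,g(n),n)$ vertices of $M$, we may pick a vertex $v$ not incident with $M$. Because $G$ is prime, for every edge $xy$ of $M$ there is a chain from $\{x,y\}$ to $v$, and by the first paragraph every such chain has length at most $n$. Hence $G$, $v$ and $M$ meet the hypotheses of Proposition~\ref{4.1} with $n=n$, $n'=g(n)$, $t=n$; note that $h(n,g(n),n)$ is exactly the number of edges required. Applying it yields a copy of $K_{1,n}^{(1)}$ (outcome~(1)), a bipartite half-graph of height $n$ (outcome~(4)), $\overline{L(K_{2,n})}$ (the complement of outcome~(2)), a spider with $n$ legs (outcome~(3) or its complement), or a half split graph of height $g(n)$; in the last case we finish via Proposition~\ref{5.1} exactly as in the previous paragraph.

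Since everything reduces to Propositions~\ref{new3.1},~\ref{4.1},~\ref{5.1} and~\ref{cor2.3}, no genuinely new idea is required and the work is bookkeeping. The two points that must be gotten right, and which I expect to be the only real obstacles, are: (a) routing the ``no chain of length $n+1$'' assumption to exactly the two places (the Proposition~\ref{4.1} and Proposition~\ref{5.1} steps) where a long chain could otherwise be returned instead of a finite configuration; and (b) verifying that $h(n,g(n),n)$ and $g(n)$ are the correct arguments, so that the numerical thresholds of those propositions line up. Once this is arranged, every outcome of every proposition invoked is visibly one of the seven listed graphs or its complement, and the corollary follows.
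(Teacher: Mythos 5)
Your proposal is correct and follows essentially the same route as the paper's proof: dispose of a chain of length $n+1$ via Proposition~\ref{cor2.3}, apply Proposition~\ref{new3.1} with $n_1=h(n,g(n),n)$, $n_2=g(n)$, send the half split graph case to Proposition~\ref{5.1}, and send the matching case to Proposition~\ref{4.1}. The only (harmless) difference is bookkeeping: the paper first assumes $G$ has no half split graph of height $g(n)$ so that outcome (5) of Proposition~\ref{4.1} cannot occur, whereas you allow that outcome and reapply Proposition~\ref{5.1}; you also make explicit a couple of details the paper leaves implicit (existence of a vertex off the matching, and that initial segments of chains are chains).
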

\begin{proof}
If $G$ contains a chain of length $n+1$, we are done.  So assume this is not the case.  Apply Proposition \ref{new3.1} with $n_1=h(n, g(n), n)$ and $n_2=g(n)$.  In outcomes \ref{new3.1}.(i) and \ref{new3.1}.(ii), we are done.  If $G$ contains a half split graph of height $g(n)$ apply Proposition \ref{5.1} to obtain $H'_{n,I}$ or $H_n^*$.  So assume now $G$ contains no half split graph of height $g(n)$.  The only possible outcome left is \ref{new3.1}.(iii), i.e., that $G$ contains an induced matching with $n_1=h(n,g(n),n)$ edges.  Combining this with our assumptions that $G$ is prime, contains no chains of length $n+1$, and contains no half split graph of height $g(n)$, we have that Proposition \ref{4.1} implies $G$ contains a copy of $K^{(1)}_{1,n}$, the bipartite half-graph of height $n$, $\overline{L(K_{2,n})}$, or a spider with $n$ legs.  This finishes the proof.
\end{proof}

\noindent We now prove Theorem \ref{mainthm} with a value for $N$ which is asymptotically much smaller than $N_{\ref{mainthm}}$.

\begin{theorem}
Let $n\geq 2$ and recall
$$
m=f(n, h(n,g(n), n), g(n)) = 2^{R(n+h(n,g(n),n), 2n-1,n+g(n), n+g(n)-1)+1}.
$$
Suppose 
$$
N= R(h(n,g(n),n), n,n,g(n))(5m)^{R(h(n,g(n),n), n,n,g(n))+1},
$$
and $G$ is a prime graph with at least $N$ vertices.  Then the conclusion of Theorem \ref{mainthm} holds.  Moreover, for large $n$, 
$$
N<<R(m,m)=N_{\ref{mainthm}}.
$$
\end{theorem}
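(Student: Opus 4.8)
The plan is to run a dichotomy on the tree rank $t(G)$ of $G$. Throughout write $t:=R(h(n,g(n),n),n,n,g(n))$, so that $N=t(5m)^{t+1}$. If $t(G)\ge t$, then a witnessing full binary type tree of height $t$ on a subset of $V(G)$ is exactly the input required by Proposition \ref{new3.1} taken with $n_1=h(n,g(n),n)$ and $n_2=g(n)$; disposing of its four outcomes precisely as in the proof of Corollary \ref{treecor} (handling a half split graph of height $g(n)$ via Proposition \ref{5.1}, an induced matching of size $h(n,g(n),n)$ via Proposition \ref{4.1} together with primality, and a chain of length $n+1$ via Proposition \ref{cor2.3}) produces one of the configurations (1)--(7) of Theorem \ref{mainthm} or a complement. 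So this case is done.

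Now suppose $t(G)<t$. Here I would extract a large homogeneous set and invoke the reduction recorded in Remark \ref{reduction}. Since $G$ is nonempty we have $1\le t(G)\le t-1$, and since $5m\ge1$ and $s\le t$ we have $s(5m)^{s+1}\le t(5m)^{t+1}=N$ for every $1\le s\le t$; in particular $N\ge t(G)(5m)^{t(G)+1}$. Corollary \ref{treecormain}, applied with this value of the tree rank, then gives a complete or independent set in $G$ of size at least
$$
\frac{\big(|V(G)|/t(G)\big)^{1/(t(G)+1)}}{4}\ \ge\ \frac{\big(N/t(G)\big)^{1/(t(G)+1)}}{4}\ \ge\ \frac{5m}{4}\ >\ m .
$$
Thus $G$ has a complete or independent set of size $m=f(n,h(n,g(n),n),g(n))$. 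If $G$ has a chain of length $n+1$, Proposition \ref{cor2.3} gives a prime induced subgraph on a chain of length $n$, which is outcome (7). Otherwise, one of $G$, $\overline{G}$ (both prime) has an independent set of size $m$ and, chains being self-complementary, still has no chain of length $n+1$; by Remark \ref{reduction} that graph satisfies the conclusion of Theorem \ref{mainthm}, hence so does $G$, the list (1)--(7) being closed under complementation.

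For the final assertion, the point is that $t$ is negligible next to $m$. Comparing the two Ramsey expressions entrywise, each argument of $R(n+h(n,g(n),n),\,2n-1,\,n+g(n),\,n+g(n)-1)$ dominates the corresponding argument of $R(h(n,g(n),n),\,n,\,n,\,g(n))$ for $n\ge1$, so by monotonicity of the Ramsey function $\log_2 m=R(n+h(n,g(n),n),2n-1,n+g(n),n+g(n)-1)+1\ge t+1$. Hence
$$
\log_2 N\ =\ \log_2 t+(t+1)\log_2(5m)\ <\ \log_2\log_2 m+(\log_2 m+1)\log_2(5m)\ =\ O\big((\log_2 m)^2\big),
$$
while the classical Erd\H{o}s bound $R(m,m)>2^{m/2}$ gives $\log_2 R(m,m)>m/2$. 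Since $m\to\infty$ with $n$ and $(\log_2 m)^2=o(m)$, this yields $\log_2 N=o(\log_2 R(m,m))$, so $N\ll R(m,m)=N_{\ref{mainthm}}$ for large $n$ (in fact $N=R(m,m)^{o(1)}$).

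Most of this is bookkeeping downstream of Theorem \ref{3.5} and Corollary \ref{treecormain}; no new combinatorial idea is needed, since Theorem \ref{3.5} has already replaced $R(m,m)$ by the much smaller $N$. The only steps demanding care are the quantitative ones at the end: checking $t<\log_2 m$ by pushing the two Ramsey expressions through monotonicity, and --- in the small-tree-rank case --- verifying that the bound of Corollary \ref{treecormain} still clears $m$ when evaluated at the true value $t(G)\le t-1$ rather than at the threshold $t$, which is exactly the inequality $s(5m)^{s+1}\le t(5m)^{t+1}$ for $s\le t$. I expect no obstacle beyond these.
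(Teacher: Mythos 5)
Your proposal is correct and follows essentially the same route as the paper: a dichotomy on the tree rank, with the large-rank case handled by Proposition \ref{new3.1} (i.e.\ Corollary \ref{treecor}) and the small-rank case by Corollary \ref{treecormain} together with Remark \ref{reduction} and Proposition \ref{cor2.3}, followed by the same comparison $t\le\log_2 m$ and the lower bound $R(m,m)\ge(\sqrt{2})^m$. Your added care (monotonicity of $s(5m)^{s+1}$ in $s$, passing to the complement when the homogeneous set is complete, and the entrywise Ramsey monotonicity) just makes explicit steps the paper leaves implicit.
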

\begin{proof}
Suppose $G$ is a prime graph with at least $N$ vertices.  Suppose first that the tree height, $t=t(G)$ is at least $R(h(n,g(n),n), n,n,g(n))$.  Then Corollary \ref{treecor} implies $G$ contains one of the desired configurations, so we are done.  Assume now that $t\leq R(h(n,g(n),n), n,n,g(n))$.  Remark \ref{reduction} and Proposition \ref{cor2.3} imply that that if $G$ contains a complete or independent set of size $m$ then the conclusion of Theorem \ref{mainthm} holds.  We show $G$ contains a complete or independent set of size $m$.  By Corollary \ref{treecormain}, $G$ contains a complete or independent independent set $I$ such that $|I|\geq \frac{(N/t)^{\frac{1}{t+1}}-2}{4}$, so it suffices to show that $\frac{(N/t)^{\frac{1}{t+1}}}{4}\geq m$.   By definition of $N$ and our assumption on $t$, $N\geq t(5m)^{t+1}$.  This implies $\frac{(N/t)^{\frac{1}{t+1}}}{4}\geq \frac{5m}{4}\geq m$.  This finishes the proof that the conclusion of Theorem \ref{mainthm} holds.  We've now left to show that $N<<N_{\ref{mainthm}}$.  Let $x=R(h(n,g(n),n), n, n, g(n))$.  Then we want to show that large $n$, $x (5m)^{x+1}<<R(m,m)$.  Note that $x\leq \log_2 m$ and recall that by \cite{spencer1}, as long as $m\geq 2$, $R(m,m)\geq (\sqrt{2})^m$.  Combining these facts, we have that the following holds for large $m$ (equivalently, for large $n$).
\begin{align*}
x(5m)^{x+1} \leq (\log_2m) (5m)^{2\log_2 m+1}<<(\sqrt{2})^m\leq R(m,m).
\end{align*}
\end{proof}

\begin{remark}
The theorem uses the fact that any graph $G$ contains a complete or independent set of size $\max\{t(G), h(G)/2\}$,  the inverse relationship between $t(G)$ and $h(G)$ from Theorem \ref{3.5}, and the fact that a binary type tree contains the building blocks of the desired configurations.  These ingredients, i.e. Theorem \ref{new3.1}, Lemma \ref{indset}, and Theorem \ref{3.5}, hold for arbitrary graphs.  
\end{remark}

%***********************************************************************************************************************
\section{An infinitary proof}\label{infinitary}
In this section we prove an analogue of Theorem \ref{mainthm} in the infinite setting, and show it implies the finite version, although without the explicit bounds.  Throughout this section we work in the first-order language of graphs, $\mathcal{L}=\{E(x,y)\}$, and employ standard model theoretic notation.  Given sets $A$ and $B$, we will write $AB$ as shorthand for $A\cup B$, and given a tuple of elements $\abar$, we will often write $\abar$ to mean the set of elements in the tuple.  The following proposition is proved in \cite{CKOS} in the setting of finite graphs, but the proof presented there also holds in the setting of infinite graphs.  Given an integer $n$, we will write $R(n)$ to mean $R(n,n)$.
\begin{proposition}[Proposition 2.1 in \cite{CKOS}]\label{chainsandmodules}
Suppose $G$ is a graph and $I\subseteq V(G)$ is a set with at least two vertices, and suppose $v\in V(G)\setminus I$.  Then $G$ has a chain from $I$ to $v$ if and only if all modules containing $I$ as a subset contain $v$.
\end{proposition}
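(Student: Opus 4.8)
The plan is to prove the two implications separately, in each case exploiting the ``local'' structure of a chain.

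For the forward implication, suppose $v_0,\dots,v_m$ is a chain from $I$ to $v=v_m$, and let $X\supseteq I$ be an arbitrary module of $G$; I claim $v_i\in X$ for all $i$, by induction on $i$. The cases $i=0,1$ are immediate since $v_0,v_1\in I\subseteq X$. For $i\ge 2$, if $v_0,\dots,v_{i-1}\in X$ but $v_i\notin X$, then modularity of $X$ forces $v_i$ to be adjacent to all of $\{v_0,\dots,v_{i-1}\}$ or to none of it; however the chain condition makes $v_{i-1}$ the \emph{unique} neighbour or the \emph{unique} non-neighbour of $v_i$ in that set, which has at least two elements, so in either case $v_i$ is adjacent to exactly one of $v_{i-1}$ and $v_0$, contradicting uniformity. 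Hence $v=v_m\in X$, and as $X$ was an arbitrary module containing $I$, this gives one direction.

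For the converse I argue contrapositively: assuming $G$ has \emph{no} chain from $I$ to $v$, I will exhibit a module containing $I$ but not $v$. The candidate is $C = I\cup\{w\in V(G)\setminus I : \text{there is a chain from } I \text{ to } w\}$. Then $I\subseteq C$, and $v\notin C$ by hypothesis (using $v\notin I$), so it remains to check that $C$ is a module; this is the heart of the argument. Suppose not, so there is $u\notin C$ that is mixed on $C$. First, $u$ cannot be mixed on $I$: if $ua\in E$ and $ub\notin E$ for some $a,b\in I$, then $b,a,u$ is a chain from $I$ to $u$, forcing $u\in C$. So $u$ is uniform on $I$, and since both ``chain'' and ``module'' are invariant under replacing $G$ by $\overline{G}$, we may assume $u$ is non-adjacent to every vertex of $I$. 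As $u$ is mixed on $C$, pick $w\in C\setminus I$ with $uw\in E$ and a chain $v_0,\dots,v_m=w$ witnessing $w\in C$. Since $v_0,v_1\in I$ we have $uv_0,uv_1\notin E$ while $uv_m\in E$; let $j$ be least with $uv_j\in E$, so $2\le j\le m$ and $uv_i\notin E$ for $i<j$. A truncation of a chain is again a chain, so $v_0,\dots,v_j$ is a chain from $I$ to $v_j$; and now $v_j$ is precisely the unique neighbour of $u$ in $\{v_0,\dots,v_j\}$, so $v_0,\dots,v_j,u$ is a chain from $I$ to $u$ (the vertices involved being distinct because the $v_i$ lie in $C$ whereas $u$ does not). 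This contradicts $u\notin C$, so $C$ is a module, as required.

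The single substantive step is the verification that $C$ is a module, and the idea that makes it work is to take a witnessing chain for some ``bad'' vertex $w$ and truncate it at the first vertex whose adjacency to $u$ ``switches'' away from $u$'s uniform behaviour on $I$---at that point $u$ itself may be appended to extend the truncated chain. Everything else is routine bookkeeping: the passage to $\overline{G}$ to dispose of the symmetric case where $u$ is adjacent to all of $I$, the observations that truncations and the one-step extension above satisfy each clause of the definition of a chain, and the distinctness of the vertices. It may also be worth recording that $C$ is in fact the \emph{smallest} module containing $I$: by the forward implication it is contained in every module that contains $I$, and by the argument just given it is one such module.
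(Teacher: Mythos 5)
Your proof is correct, and every step checks out against the paper's definitions: the forward direction by induction along the chain (using that for $i\ge 2$ the set $\{v_0,\dots,v_{i-1}\}$ has at least two elements, so the ``unique (non-)neighbour'' clause contradicts uniformity of $v_i$ on a module), and the converse by verifying that the chain-closure $C$ of $I$ is a module, via truncating a witnessing chain at the first vertex where the adjacency to $u$ switches and appending $u$. Note that the paper itself does not prove this statement -- it is imported from \cite{CKOS} (Proposition 2.1) with the remark that the proof there also works for infinite graphs -- so there is no in-paper argument to compare against; your argument is essentially the standard one, and it is worth observing that nothing in it uses finiteness of $G$ (chains are finite by definition, the induction runs along a single chain, and the set $C$ makes sense for arbitrary vertex sets), so it establishes exactly the infinitary version the paper needs in Section 6. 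The passage to $\overline{G}$ to handle the case where $u$ is adjacent to all of $I$ is legitimate since chains and modules are complement-invariant, though one could equally run the same truncation argument with ``unique non-neighbour'' in place of ``unique neighbour''; your closing remark that $C$ is the smallest module containing $I$ is also correct and is the conceptual content of the proposition.
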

\noindent A useful and straightforward corollary of this is the following.
\begin{corollary}\label{chainsandprimeness}
A graph $G=(V,E)$ is prime if and only if for every set of pairwise distinct vertices $\{x_1,x_2, x_3\}\subseteq V$, there is chain from $\{x_1,x_2\}$ to $x_3$ in $G$.
\end{corollary}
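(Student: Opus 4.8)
The plan is to read off both implications directly from Proposition \ref{chainsandmodules}, using the elementary observation that a module $M$ with $|M|\ge 2$ is non-trivial precisely when $M\ne V(G)$.

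First I would handle the forward direction. Assume $G$ is prime and fix pairwise distinct $x_1,x_2,x_3\in V$. Put $I=\{x_1,x_2\}$; this set has two vertices and $x_3\notin I$, so by Proposition \ref{chainsandmodules} it suffices to verify that every module $M$ with $I\subseteq M$ contains $x_3$. Any such $M$ has $|M|\ge |I|=2$, so it is not a singleton; since $G$ is prime and $M$ is a module, $M$ must be trivial, forcing $M=V(G)$ and hence $x_3\in M$. Thus Proposition \ref{chainsandmodules} produces a chain from $\{x_1,x_2\}$ to $x_3$. For the converse, suppose the chain condition holds, and suppose towards a contradiction that $G$ has a non-trivial module $M$; then $|M|\ge 2$ and $M\ne V(G)$, so we may pick distinct $x_1,x_2\in M$ and some $x_3\in V\setminus M$. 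By hypothesis there is a chain from $I=\{x_1,x_2\}$ to $x_3$, so Proposition \ref{chainsandmodules} says every module containing $I$ contains $x_3$; but $M$ is a module containing $I$ and omitting $x_3$, a contradiction. Hence $G$ has no non-trivial module, i.e. $G$ is prime.

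I do not expect a genuine obstacle here; the only point requiring a word of care is the degenerate range $|V|\le 2$, where the chain condition is vacuous and every graph is prime (its only modules being singletons and $V(G)$ itself), so the equivalence holds trivially. For $|V|\ge 3$ the argument above applies verbatim, and in fact in the converse the constraints $|M|\ge 2$ and $M\ne V(G)$ already force $|V|\ge 3$, so the choice of $x_3$ is always available.
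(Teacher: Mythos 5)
Your proof is correct and follows essentially the same route as the paper: both directions are read off from Proposition \ref{chainsandmodules}, with primeness forcing any module containing $\{x_1,x_2\}$ to be all of $V(G)$ in one direction, and a putative non-trivial module yielding the contradicting triple $x_1,x_2,x_3$ in the other. The only cosmetic difference is that you phrase the forward direction directly while the paper argues by contradiction, and your remark on the degenerate case $|V|\le 2$ is a harmless addition the paper omits.
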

\begin{proof}
Suppose $G=(V,E)$ is a prime graph and $x_1,x_2, x_3\in V$ are pairwise distinct vertices.  Suppose there is no chain from $\{x_1,x_2\}$ to $x_3$.  Then by Proposition \label{chainsandmodules}, there is a module $I$ containing $\{x_1,x_2\}$ as a subset and not containing $v$.  But now $I$ is a nontrivial module, contradicting that $G$ is prime.

Conversely, suppose for every set $\{x_1,x_2, x_3\}\subseteq V$ of pairwise distinct vertices, there is chain from $\{x_1,x_2\}$ to $x_3$ in $G$.  We show that any module $I$ in $G$ is either a singleton or all of $V$.  Suppose by contradiction $I$ is a module which is neither a singleton, nor all of $V$.  Then there are $x_1\neq x_2\in I$ and $x_3\in V\setminus I$.  By assumption there is a chain from $\{x_1,x_2\}$ to $x_3$, so Proposition \ref{chainsandmodules} implies that every module containing $\{x_1,x_2\}$ also contains $x_3$.  In particular, $x_3\in I$, a contradiction.
\end{proof}

\begin{definition} Fix an integer $n\geq 1$.
\begin{enumerate}[(1)]
\item Let $\phi_n(x,y,z)$ be the formula saying that there exists a chain of length at most $n$ from $\{x,y\}$ to $z$.  
\item Let $\psi_n$ be the sentence saying that for any pairwise distinct $x_1,x_2,x_3$, there is a chain of length at most $n$ from $\{x_1,x_2\}$ to $x_3$, i.e. the sentence
$$
\forall x_1x_2x_3 \Bigg(\Bigg(\bigwedge_{1\leq i\neq j\leq 3}x_i\neq x_j \Bigg)\rightarrow \phi_n(x_1,x_2,x_3)\Bigg).
$$
\item Let $\sigma_n$ be the sentence saying that there exists a copy of $H_n$ or a copy of $\overline{H_n}$ as an induced subgraph.
\item Let $\theta_n$ be the sentence saying there exists a copy of $H_{n,I}'$, $H_n^*$ or $\overline{H_n^*}$.
\item Let $\rho_n$ be the sentence which says that one of the following or the compliment of one of the following appears as induced subgraph: $K^{(1)}_{1,n}$, $L(K_{2,n})$, a spider with $n$ legs.
\end{enumerate}
\end{definition}

Given $k\geq 1$, we will call a graph $G$ $k$-edge-stable if $G$ omits all half-graphs of height $k$.  We will call $G$ edge-stable when it is $k$-edge stable for some $k$ (equivalently, when its edge relation is a stable formula).  Call a subset of $I$ of $G$ \emph{edge indiscernible} if it is indiscernible with respect to the edge relation.  We remark that Proposition \ref{5.1} applies in the case of an infinite prime graph as well as a finite one, via exactly the same proof as in \cite{CKOS}.  Given a formula $\phi$, we let $\phi^1=\phi$ and $\phi^0=\neg \phi$.  We now recall a definition and claim from \cite{MS}.

\begin{definition}
Given $\ell\geq 2$, let $\Delta_{\ell}=\{E(x_0,x_1)\}\cup \{\phi^i_{\ell, m}: m\leq \ell, i\in \{0,1\}\}$,
where
$$
\phi^i_{\ell, m}=\phi_{\ell, m}^i(x_0,\ldots, x_{\ell-1})=\exists y\Bigg(\bigwedge_{j<\ell}E(x_j,y)^{\text{ if }i=0} \wedge \bigwedge_{m\leq j\leq \ell}E(x_j,y)^{\text{ if }i=1}\Bigg).
$$
\end{definition}

\begin{claim}[Claim 3.2 of \cite{MS}]\label{MS3.2}
Suppose $G$ is an $\ell$-edge stable graph.  Suppose $m\geq 4\ell$ and $\langle a_i: i<\alpha \rangle$ is a $\Delta_{\ell}$-indiscernible sequence in $G$, and $b\in G$.  Then either $|\{i: E(a_i, b)\}|<2\ell$ or $|\{i: \neg E(a_i, b)\}|<2\ell$.
\end{claim}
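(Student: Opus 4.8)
The plan is to prove the contrapositive in the form: if $b$ is adjacent to at least $2\ell$ of the $a_i$ and non-adjacent to at least $2\ell$ of the $a_i$, then $G$ contains an induced half-graph of height $\ell$, contradicting $\ell$-edge-stability. Write $I_+=\{i:E(a_i,b)\}$ and $I_-=\{i:\neg E(a_i,b)\}$, and assume $|I_+|,|I_-|\geq 2\ell$ (note this already forces the index set to have size at least $4\ell$, consistent with the standing length hypothesis, and in particular $\alpha\geq\ell$).

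First I would establish a small combinatorial fact: one can choose $\ell$ indices from $I_+$ all lying below $\ell$ indices from $I_-$, or the same with the roles of $I_+$ and $I_-$ reversed. To see this, let $p$ and $q$ be the $\ell$-th smallest elements of $I_+$ and $I_-$ respectively; if say $p\leq q$, then since $|I_-|\geq 2\ell$ there are at least $\ell$ elements of $I_-$ exceeding $q$, hence exceeding $p$, and together with the $\ell$ smallest elements of $I_+$ these yield the desired configuration. By symmetry (exchanging the two cases if necessary) assume there are $p_1<\cdots<p_\ell$ in $I_+$ and $q_1<\cdots<q_\ell$ in $I_-$ with $p_\ell<q_1$; so $b$ is adjacent to every $a_{p_r}$ and non-adjacent to every $a_{q_r}$.

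Next I would use $b$ to realize every ``split pattern'' along the sequence. For each $m\in\{0,1,\dots,\ell\}$ the $\ell$-tuple $(a_{p_1},\dots,a_{p_m},a_{q_1},\dots,a_{q_{\ell-m}})$ is increasing in the order of the sequence, and $b$ is adjacent to its first $m$ entries and non-adjacent to the remaining $\ell-m$. Hence $b$ witnesses in $G$ the formula of $\Delta_\ell$ asserting the existence of a vertex adjacent to exactly an initial segment of length $m$ of its $\ell$ arguments (this is $\phi^i_{\ell,m}$ for the appropriate choice of $i$). Because $\langle a_i:i<\alpha\rangle$ is $\Delta_\ell$-indiscernible and these formulas lie in $\Delta_\ell$, each such formula holds of \emph{every} increasing $\ell$-tuple from the sequence, in particular of the fixed tuple $(a_0,\dots,a_{\ell-1})$.

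Finally I would assemble the half-graph. Put $U_j=a_{j-1}$ for $1\leq j\leq\ell$; for each $k\in\{1,\dots,\ell\}$ the previous step applied with split $m=k$ gives $w_k\in G$ adjacent to $U_1,\dots,U_k$ and non-adjacent to $U_{k+1},\dots,U_\ell$, so $U_j$ is adjacent to $w_k$ if and only if $j\leq k$. The $w_k$ are pairwise distinct since they have distinct neighbourhoods among $U_1,\dots,U_\ell$, and one checks $w_k\neq U_j$ using that $E(x_0,x_1)\in\Delta_\ell$ forces the $a_i$'s either to form an independent set or a clique, either of which is incompatible with an equality $U_j=w_k$. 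Thus $\{U_1,\dots,U_\ell,w_1,\dots,w_\ell\}$ induces a half-graph of height $\ell$, the desired contradiction. I expect the main obstacle to be matching the precise syntactic form of the formulas $\phi^i_{\ell,m}$ to the prefix/suffix adjacency patterns required by the half-graph, together with the attendant index-bookkeeping in the last two steps; the combinatorial step and the appeal to indiscernibility are routine.
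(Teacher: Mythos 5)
Your overall strategy -- reduce to $\ell$ ``positive'' indices lying entirely below $\ell$ ``negative'' ones, transfer the split-pattern existentials $\phi^i_{\ell,m}$ along the $\Delta_\ell$-indiscernible sequence, and assemble the witnesses into a half-graph of height $\ell$ contradicting $\ell$-edge-stability -- is the right one (the present paper does not reprove this claim, it imports it from \cite{MS}, and this is essentially that argument). However, your final distinctness check contains a genuine gap. You assert that the equality $w_k=U_j$ is incompatible with the $a_i$'s forming a clique or an independent set, but in the clique case this is false for $(k,j)=(\ell-1,\ell)$: in a clique, the vertex $U_\ell=a_{\ell-1}$ itself is adjacent to $U_1,\dots,U_{\ell-1}$ and (trivially, since there are no loops) non-adjacent to $U_\ell$, so the formula ``there exists $y$ adjacent to $x_0,\dots,x_{\ell-2}$ and not to $x_{\ell-1}$'' is automatically true on every increasing tuple of the clique, and indiscernibility gives you no witness for the split $m=\ell-1$ other than possibly $U_\ell$ itself. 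If that coincidence occurs you only have $2\ell-1$ distinct vertices, and discarding the offending column yields merely a half-graph of height $\ell-1$, which does not contradict $\ell$-edge-stability; so the proof as written does not close in the clique case.

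The gap is local and repairable, but it needs an extra idea rather than the check you gave. One fix: take the base (row) tuple to be $(a_{p_1},\dots,a_{p_{\ell-1}},a_{q_1})$ instead of $(a_0,\dots,a_{\ell-1})$, and use $b$ itself as the column for the split $m=\ell-1$, since $b$ is adjacent to $a_{p_1},\dots,a_{p_{\ell-1}}$ and not to $a_{q_1}$ by construction. Note that under the contradiction hypothesis $b$ cannot be a member of the sequence: if the $a_i$ form a clique then a member of the sequence has at most one non-neighbour among the $a_i$, contradicting $|I_-|\geq 2\ell$, and if they form an independent set it has no neighbours there, contradicting $|I_+|\geq 2\ell$. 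The remaining columns are supplied by indiscernibility exactly as in your argument; they are distinct from the rows by your case analysis (the only possible row coincidence was at the split $\ell-1$, now handled by $b$), and all columns are pairwise distinct, and distinct from $b$, because they realize pairwise distinct adjacency patterns on the rows. With this adjustment your proof goes through with the stated constants.
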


\begin{proposition}\label{infinite3.1}
For any integer $n\geq 1$, any infinite graph satisfying $\psi_n\wedge \neg \sigma_n\wedge \neg \theta_{n}$ is prime, edge-stable, and contains one of the following or the compliment of one of the following as an induced subgraph.
\begin{enumerate}[(1)]
\item A spider with $\omega$ many legs,
\item $L(K_{2,\omega})$,
\item A perfect matching of length $\omega$.
\end{enumerate}
\end{proposition}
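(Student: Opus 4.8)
The plan is to follow the case structure of the finitary proof with infinite Ramsey in place of finite Ramsey, using Claim \ref{MS3.2} where the finitary argument used stability. Primeness is immediate: $\psi_n$ asserts in particular that every triple of distinct vertices of $G$ carries a chain, so $G$ is prime by Corollary \ref{chainsandprimeness}.

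For edge-stability I would argue by contradiction. If $E$ is unstable on $G$, then $G$ contains (unrestricted) half-graphs of every finite height; applying Ramsey's theorem to the two sides of a large half-graph, for each $m$ the graph $G$ contains an induced copy of $H_m$, of $\overline{H_m}$, or of the half split graph $H'_m$, and since each of these three families is closed under passage to smaller members, one of them occurs for every $m$. The families $\{H_m\}$ and $\{\overline{H_m}\}$ are excluded by $\neg\sigma_n$ (take $m\ge n$). So $G$ contains half split graphs of every height; being prime, it satisfies the hypothesis of Proposition \ref{5.1} (valid for infinite prime graphs) with any parameter $m$, yielding a chain of length $m+1$ or one of $H'_{m,I},H_m^*,\overline{H_m^*}$. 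The latter is barred by $\neg\theta_n$, since $H'_{n,I},H_n^*,\overline{H_n^*}$ occur as induced subgraphs of $H'_{m,I},H_m^*,\overline{H_m^*}$, so $\neg\theta_n\Rightarrow\neg\theta_m$ for all $m\ge n$. Hence $G$ has chains of every length, and Proposition \ref{cor2.3} gives, for every $k$, an induced prime subgraph of $G$ carried by a chain of length $k$; applying the finitary Theorem \ref{mainthm} to a sufficiently large one reproduces inside $G$ an induced bipartite half-graph or an $H'_{n,I}$/$H_n^*$-configuration (contradicting $\neg\sigma_n$ or $\neg\theta_n$) or one of the remaining outcomes. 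Ruling out those remaining possibilities — equivalently, bounding chain length in $G$ directly from the hypotheses — is the crux of this step.

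For the extraction, assume $G$ is prime and $\ell$-edge-stable. By the infinite Ramsey theorem $G$ has an infinite independent set or an infinite clique; in either case, running the independent-set or, dually, the complete-set version of the analysis (as in the remark following Proposition \ref{new3.1}), a further application of Ramsey — $\Delta_\ell$ being finite — yields an infinite set $I=\{a_i:i<\omega\}$ that is $\Delta_\ell$-indiscernible, in particular edge-indiscernible, and independent (resp. complete); say independent. By Claim \ref{MS3.2}, every vertex of $G$ is adjacent to fewer than $2\ell$, or to all but fewer than $2\ell$, of the $a_i$, so $I$ is an approximate module. Since $G$ is prime, $I$ is not a module, and as $I$ is infinite there are infinitely many vertices mixed on $I$, each mixed on only a bounded portion. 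A counting and Ramsey analysis of these vertices against $I$, parallel to the proofs of Proposition \ref{new3.1} and Proposition \ref{3.1} but now producing infinite objects, yields inside $G$ an infinite (thin or thick) spider, an infinite induced matching, a copy of $\overline{L(K_{2,\omega})}$, an infinite bipartite half-graph, or an infinite half split graph. The last two are impossible: an infinite bipartite half-graph contains $H_n$, contradicting $\neg\sigma_n$, and an infinite half split graph contains half split graphs of unbounded height, barred in the previous step. What remains is exactly (1), (2) up to complementation, or (3).

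The main obstacle throughout is the same: showing that $\psi_n\wedge\neg\sigma_n\wedge\neg\theta_n$ caps the length of chains in $G$. This is what forces $E$ to be stable and what removes the half-graph and half split graph outcomes in both steps. I expect the chain bound to come from analyzing, in the spirit of Propositions \ref{cor2.3}, \ref{4.1}, and \ref{5.1}, how a long chain together with $\psi_n$ applied to its endpoints forces structure that must collide with $\neg\sigma_n$ or $\neg\theta_n$.
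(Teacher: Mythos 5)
Your opening two steps (primeness via Corollary \ref{chainsandprimeness}, and attacking instability by applying Ramsey to the two sides of a half-graph and then invoking Proposition \ref{5.1}) match the paper's proof. But your argument has a genuine gap, and it is the one you yourself flag: you organize the whole proof around showing that $\psi_n\wedge\neg\sigma_n\wedge\neg\theta_n$ bounds the length of chains in $G$, and you do not prove this. This plan is almost certainly unworkable: $\psi_n$ only asserts the \emph{existence} of a chain of length at most $n$ for each triple; it does not forbid long chains, and a long chain yields (via Proposition \ref{cor2.3} and Theorem \ref{mainthm}) only a prime chain-induced subgraph, which is itself an allowed outcome and forces no collision with $\neg\sigma_n$ or $\neg\theta_n$. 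The paper never attempts, and never needs, such a bound: for stability it applies Proposition \ref{5.1} once, at the fixed parameter $n$, to a half split graph of height $g(n)$ obtained from a half-graph of the fixed finite height $\ell=R(R(g(n)))$ (the chain alternative of Proposition \ref{5.1} is passed over there without comment, but no global chain bound is ever invoked), and chains play no role whatsoever in the rest of the argument.

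The second, related gap is that your extraction step defers exactly the part where the paper does its real work. After fixing $\ell$-edge-stability, the paper takes an infinite $\Delta_\ell$-indiscernible sequence $I$ (complete or independent), applies Claim \ref{MS3.2} so that every vertex outside $I$ has a \emph{finite} exceptional set $S_b$ on $I$, and then runs an explicit recursion: since $G$ is prime, no infinite subset of $I$ is a module, so one repeatedly finds a new vertex $b_k$ mixed on the current infinite $A_{k-1}\subseteq I$, chooses $a_k\in S_{b_k}\cap A_{k-1}$, and deletes $\{a_k\}\cup S_{b_k}$ to form $A_k$; this forces the matching pattern $E(b_i,a_j)^{f(b_i)}\Leftrightarrow i=j$, and one further application of Ramsey plus a four-way case split yields precisely a thick or thin spider with $\omega$ legs, $\overline{L(K_{2,\omega})}$, or a perfect matching of length $\omega$. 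In particular, no half-graph or half split outcome can arise against $I$, because Claim \ref{MS3.2} prevents any outside vertex from being ``halfway'' on $I$; so the step in your sketch where you must exclude ``an infinite half split graph, barred in the previous step'' never occurs --- which is fortunate, since you had not in fact barred it (that exclusion rested on the unresolved chain bound). As written, your ``counting and Ramsey analysis parallel to Propositions \ref{new3.1} and \ref{3.1}'' is a placeholder for this construction rather than a proof of it, and the assertion that infinitely many vertices are mixed on $I$ is not by itself enough: one needs the iterative shrinking of $I$ to keep the new mixed vertices distinct from, and unmixed with respect to, the pairs already built.
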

\begin{proof}
Since $G\models \psi_n$, Corollary \ref{chainsandprimeness} implies $G$ is prime.  Set $\ell=R(R(g(n)))$. We show $G$ is $\ell$-edge-stable.  Suppose by contradiction $G$ contains a half-graph $a_1b_1,\ldots, a_{\ell}b_{\ell}$ so that $E(a_i,b_j)$ if and only if $i\leq j$.  By Ramsey's theorem, there is a complete or independent set $A\subseteq \{a_1,\ldots, a_{\ell}\}$ such that $|A|=R(g(n))$.  By reindexing, assume $A=\{a_1,\ldots, a_{R(g(n))}\}$.  Applying Ramsey's theorem again, we have that there is a complete or independent set $B'\subseteq\{b_1,\ldots, b_{R(g(n))}\}$ such that $|B'|=g(n)$.  By reindexing, assume $B'=\{b_1,\ldots, b_{g(n)}\}$.  Then $a_1b_1,\ldots,a_{g(n)}b_{g(n)}$ forms an induced copy of $H_{g(n)}$, $\overline{H_{g(n)}}$, or a half split graph of height $g(n)$.  Since $G\models \neg \sigma_n$, it must contain a half split graph of height $g(n)$.  By Proposition \ref{5.1}, $G$ contains an induced copy of $H_{n,I}'$, $H_n^*$, or $\overline{H_n^*}$, contradicting that $G\models \neg \theta_n$.  Therefore $G$ is $\ell$-edge-stable.

By Ramsey's theorem there is an infinite $\Delta_{\ell}$-indiscernible sequence $I=\{c_i: i<\omega\}$ in $G$.  Note $I$ is a complete or independent set.  Without loss of generality, assume it is independent (otherwise we obtain the compliments everything that follows).  Claim \ref{MS3.2} implies that for all $b\notin I$, either $|\{c_i: E(b,c_i)\}|\leq 2\ell$ or $|\{c_i: \neg E(b,c_i)\}|\leq 2\ell$.  Given $b\notin I$, set 
\[
f(b)=\begin{cases}
1&\text{ if }|\{c_i:  E(b,c_i)\}|\leq 2\ell \\
0&\text{ if }|\{c_i: \neg E(b,c_i)\}|\leq 2\ell,
\end{cases}
\]
and set $S_b=\{c_i: E(b,c_i)^{f(b)}\}$.  We construct two sequences $J_1=\{a_i: i<\omega\}$ and $J_2=\{b_i: i<\omega\}$ along with a sequence of sets $\{A_i: i<\omega\}$ with the following properties.
\begin{enumerate}[$\bullet$]
\item For each $k<\omega$, $b_k\notin Ib_1\ldots b_{k-1}$ and $a_k\in S_{b_k}$,
\item for each $i,j< \omega$, $E(b_i,a_j)^{f(b_i)} \Leftrightarrow i= j$,
\item $I\supseteq A_1\supseteq A_2 \supseteq \ldots$ and for each $k<\omega$, $|A_k|=\omega$,
\item For each $j\leq k<\omega$, $A_k\cap S_{b_j}=\emptyset$.
\end{enumerate}

Step $0$: Since $I$ is not a module, there is a vertex $b_1$ which is mixed on $I$.  Note that since $I$ is edge-indiscernible, we must have that $b_1\notin I$.  Choose $a_1\in S_b$ and set $A_1=I\setminus S_{b_1}$.  Note that since $|I|=\omega$ and $a_1S_{b_1}$ is finite, $|A_1|=\omega$.

Step $k$: Suppose now we've constructed $b_1a_1,\ldots, b_{k-1}a_{k-1}$, and $A_1,\ldots, A_{k-1}$ satisfying the desired hypotheses.  Since $A_{k-1}$ is not a module, there is $b_k$ which is mixed on $A_{k-1}$.  In other words, $A_{k-1}\cap S_{b_k}\neq \emptyset$.  Since $I$ is edge-indiscernible, $b_k$ is not in $I$.  For each $j<k$, $A_{k-1}\cap S_{b_j}=\emptyset$ implies $b_j$ is not mixed on $A_{k-1}$.  Therefore $b_k \notin \{b_1\ldots b_{k-1}\}$.  Choose $a_k\in S_{b_k}\cap A_{k-1}$ and set $A_k= A_{k-1}\setminus a_kS_{b_k}$.  Note that by our induction hypothesis, $|A_{k-1}|=\omega$ and by definition $a_kS_{b_k}$ is finite, so $|A_k|=\omega$.  This completes the construction.

By Ramsey's theorem, there are infinite subsequences $I_1=(a'_i)_{i<\omega} \subseteq (a_i)_{i<\omega}$ and $I_2=(b'_i)_{i<\omega} \subseteq (b_i)_{i<\omega}$ such that $I_1I_2=(a'_ib'_i)_{i<\omega}$ is edge-indiscernible.  If $I_2$ is a complete set and $f(b_1')=0$, then $I_2I_2$ is a thick spider with $\omega$ many legs.  If $I_2$ is a complete set and $f(b_1')=1$, then $I_1I_2$ is a thin spider with $\omega$ many legs.  If $I_2$ is an independent set and $f(b'_1)=0$, then $I_1I_2$ forms a copy of $\overline{L(K_{2,\omega})}$.  Therefore we are left with the case when $I_2$ is an independent set and $f(b_1')=1$.  In this case $I_1I_2$ forms a perfect matching of length $\omega$.
\end{proof}

\noindent The following argument is an infinitary version of the argument used to prove Proposition \ref{4.1} in \cite{CKOS}.
\begin{proposition}\label{infinite4.1}
Suppose $G$ is an infinite, prime, edge-stable graph satisfying $\psi_n$ and suppose $M$ is an infinite perfect matching in $G$.  Then $G$ contains of one of the following or the compliment of one of the following as an induced subgraph.
\begin{enumerate}[(1)]
\item $K^{(1)}_{1,\omega}$,
\item $L(K_{2,\omega})$,
\item A spider with $\omega$-many legs.
\end{enumerate}
\end{proposition}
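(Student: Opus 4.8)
The plan is to prove, by induction on $m$, the following statement: if $G$ is an infinite, prime, edge-stable graph satisfying $\psi_n$, $M$ is an infinite perfect matching in $G$, $v\in V(G)\setminus V(M)$, and every edge of $M$ admits a chain of length at most $m$ to $v$, then $G$ induces a copy of $K^{(1)}_{1,\omega}$, of $L(K_{2,\omega})$, of a spider with $\omega$-many legs, or of the compliment of one of these. This is enough: if $V(G)=V(M)$ then $G$ is a perfect matching with infinitely many edges, in which the vertex set of any single edge is a non-trivial module, contradicting primeness; so fix $v\in V(G)\setminus V(M)$, apply $\psi_n$ (the relevant triples being pairwise distinct since $v\notin V(M)$) to obtain, for each edge of $M$, a chain of length at most $n$ to $v$, and invoke the statement with $m=n$. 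Throughout we pass freely to infinite sub-matchings and reindex. For the base case $m=2$: a chain of length $2$ from an edge to $v$ exists exactly when $v$ is mixed on that edge, so we may name the endpoints of each edge $a_i,b_i$ with $a_iv\in E$, $b_iv\notin E$, and then $\{v\}\cup\bigcup_i\{a_i,b_i\}$ induces $K^{(1)}_{1,\omega}$ with centre $v$.

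Now suppose $m\geq 3$. Pigeonholing the lengths of the chosen chains, either infinitely many edges admit a chain of length at most $m-1$ — in which case we apply the inductive hypothesis to those — or we may assume each chosen chain $C_i=(v^i_0,v^i_1,v^i_2,\dots,v^i_m)$ from $\{v^i_0,v^i_1\}$ to $v=v^i_m$ has length exactly $m$. Being the third vertex of a chain, $v^i_2$ is adjacent to exactly one of $v^i_0,v^i_1$, i.e. mixed on the edge $\{v^i_0,v^i_1\}$; hence $v^i_2$ lies outside $V(M)$, since a matching vertex not in $\{v^i_0,v^i_1\}$ is non-adjacent to both $v^i_0$ and $v^i_1$. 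Write $\{v^i_0,v^i_1\}=\{c_i,d_i\}$ with $c_iv^i_2\in E$ and $d_iv^i_2\notin E$. If infinitely many $v^i_2$ coincide, the common vertex is mixed on all the associated edges, again producing $K^{(1)}_{1,\omega}$; so we may assume the $v^i_2$ are pairwise distinct.

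Apply infinite Ramsey to the pairs $i<j$, coloured by the adjacencies among the six vertices $v^i_2,v^j_2,c_i,d_i,c_j,d_j$, to arrange that each of $[v^i_2\sim v^j_2]$, $[v^i_2\sim c_j]$, $[v^i_2\sim d_j]$ (for $i\neq j$) depends only on whether $i<j$ or $i>j$; since $v^i_2\sim c_i$ and $v^i_2\not\sim d_i$ always hold, any genuine dependence on the order would exhibit a half-graph of infinite height between two disjoint infinite sets, contradicting edge-stability. So these truth values are constants $X,Y,Z\in\{0,1\}$, and it remains to analyse four cases. If $Z=1$, then $\{v^i_2,d_i:i<\omega\}$ induces a thick spider when $X=1$ and $\overline{L(K_{2,\omega})}$ when $X=0$. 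If $Z=0$ and $Y=1$, then $v^0_2$ is mixed on every edge of the (pruned) matching, giving $K^{(1)}_{1,\omega}$. If $Z=Y=0$ and $X=1$, then $\{v^i_2,c_i:i<\omega\}$ induces a thin spider. Finally, if $X=Y=Z=0$, then $\{\{c_i,v^i_2\}:i<\omega\}$ is an induced perfect matching, and truncating $C_i$ to $(c_i,v^i_2,v^i_3,\dots,v^i_m)$ yields a chain of length $m-1$ from $\{c_i,v^i_2\}$ to $v$ — deleting the single vertex $d_i$ from each initial segment of $C_i$ leaves the relevant unique neighbour (or unique non-neighbour) witness in place — so the inductive hypothesis applies to this new matching and the same $v$.

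The main obstacle is the inductive step: setting up the Ramsey reduction so that edge-stability leaves exactly the four cases above, verifying that the first three genuinely realise one of the listed configurations or its compliment, and checking that the truncation in the fourth case is a bona fide chain of strictly smaller length with the same endpoint. It is worth noting that edge-stability is precisely what removes the two outcomes of Proposition \ref{4.1} absent from the present list — the bipartite half-graph and the half split graph of large height — since in the infinite setting either of these is a half-graph of infinite height.
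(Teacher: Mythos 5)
Your proposal is correct and follows essentially the same route as the paper's proof: induction on the chain length to a fixed vertex outside the matching, with the base case ($v$ mixed on infinitely many edges) giving $K^{(1)}_{1,\omega}$, and the inductive step homogenizing the third chain vertices via Ramsey plus edge-stability and then splitting into the same cases (thick/thin spider, $\overline{L(K_{2,\omega})}$, $K^{(1)}_{1,\omega}$, or a new induced matching with strictly shorter chains on which to recurse). The only differences are cosmetic bookkeeping: you fix one vertex $v$ and pigeonhole on chain lengths where the paper minimizes the parameter $t(v)$ over all $v$ (which is how it rules out your case $Y\neq Z$, a case you instead resolve directly as $K^{(1)}_{1,\omega}$), and you replace the paper's indiscernible-sequence extraction with an explicit Ramsey coloring plus a hands-on half-graph argument.
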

\begin{proof}
Suppose $G$ is an infinite, prime, edge-stable graph satisfying $\psi_n$ and suppose $M$ is an infinite perfect matching $M$ in $G$.  Since $M$ is not prime, $V(G)\setminus V(M)\neq \emptyset$.  Since $G$ is prime and satisfies $\psi_n$, Corollary \ref{chainsandprimeness} implies that for every $v\in V(G)\setminus V(M)$ there is an integer $t(v)\leq n$ such that there is a chain of length less than or equal to $t(v)$ from $v$ to $e$ for infinitely many $e\in M$.  Set $t=t(M)=\min \{t(v): v\in V(G)\setminus V(M)\}$.  We show by induction on $2\leq t\leq n$ that the conclusion of the proposition is true.

Fix $v\in V$ such that $t(v)=t$ and an infinite $M'\subseteq M$ such that there is a chain of length at most $t$ from $v$ to $e$ for every $e\in M'$.  Suppose first that $t=2$.  Then $vM'$ is isomorphic to $K^{(1)}_{1,\omega}$ and we are done.  Assume now $2<t\leq n$ and suppose by induction that for all $2\leq t'<t$, if $G$ contains an infinite perfect matching $M''$ with $t(M'')=t'$, then the conclusion of the proposition holds.  Enumerate $M'=\{x_iy_i: i<\omega\}$ and delete the edges $e\in M'$ on which $v$ is mixed. Since $t>2$, we have deleted only finitely many elements of $M'$.  For each $i<\omega$ choose a chain $C_{x_iy_i}=\{v_0,v_1,\ldots, v_n\}$ from $x_iy_i$ to $v$ (so $\{x_i,y_i\}=\{v_0,v_1\}$).  Set set $z_i=v_2$.  

Note by assumption, $v$ is not mixed on any $x_iy_i$, so $z_i\neq v$, and since $M'$ is a matching, $z_i\notin M'$.  By Ramsey's theorem, the sequence $(x_iy_iz_i)_{i<\omega}$ contains an infinite indiscernible sequence $(x_i'y_i'z_i')_{i<\omega}$.  Since $t>2$, we must have that for each $i<\omega$, $z_i'$ is not mixed on $x_j'y_j'$ for all $j\neq i$, so in particular, $E(z_1', x_2')\equiv E(z_1', y_2')$.  Since $G$ is edge-stable, we have that $E(z_1'x_2') \equiv E(z_2'x_1')$ and $E(z_1'y_2') \equiv E(z_2'y_1')$.  Combining all of this, we have 
$$
E(z_2'x_1')\equiv E(z_1'x_2')\equiv E(z_1'y_2') \equiv E(z_2'y_1').
$$
By relabeling if necessary, we may assume $E(z_1'y_1')$ and $\neg E(z_1',x_1')$.  By indiscernibility and our assumptions, the type of $(x_i'y_i'z_i')_{i<\omega}$ depends only on $E(z'_1,x_2')$ and $E(z'_1,z_2')$.  Suppose first that $E(z_1',z_2')$, so $(z_i')_{i<\omega}$ is a complete set.  If $E(z_1',x_2')$, then $(z'_i,x_i')_{i<\omega}$ is a thick spider with $\omega$ many legs.  If $\neg E(z_1',x_2')$, then $(z'_i,y'_i)_{i<\omega}$ is a thin spider with $\omega$ many legs.  

Suppose now that $\neg E(z_1',z_2')$, so $(z_i')_{i<\omega}$ is an independent set.  If $E(z_1',x_2')$, then $(z_i', x_i')_{i<\omega}$ is a copy of $\overline{L(K_{2,\omega})}$.  If $\neg E(z_1',x_2')$, then $M'':=(z_i',y_i')_{i<\omega}$ is an infinite perfect matching.  In this case, we now have that for each $i<\omega$, $C_{x'_iy_i'}\setminus \{x_i'\}$ is a chain of length at most $t-1$ from $\{z_i',y_i'\}$ to $v$, that is $t(M'')=t-1$.  By our induction hypothesis, $G$ satisfies the conclusion of the proposition.
\end{proof}

\noindent We now prove a version of Theorem \ref{mainthm} for infinite graphs, then use it to prove Theorem \ref{mainthm}.
\begin{theorem}
An infinite prime graph $G$ contains one of the following.
\begin{enumerate}
\item Copies of $H_n$, $\overline{H_n}$, $H_n^*$, $\overline{H_n^*}$, $H'_{n,I}$, or $\overline{H'_{n,I}}$ for arbitrarily large finite $n$,
\item Prime graphs induced by arbitrarily long finite chains,
\item $K^{(1)}_{1,\omega}$ or its compliment,
\item $L(K_{2,\omega})$ or its compliment,
\item A spider with $\omega$ many legs.
\end{enumerate}
\end{theorem}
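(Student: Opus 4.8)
The plan is to prove the contrapositive form: assume $G$ is an infinite prime graph for which neither (1) nor (2) holds, and deduce that (3), (4), or (5) holds. The heart of the matter is to convert the failure of (1) and (2) into a single integer $m$ with $G\models\psi_m\wedge\neg\sigma_m\wedge\neg\theta_m$, so that Proposition \ref{infinite3.1} becomes available.

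The first step is the (easy) observation that the six families in (1) are nested under taking induced subgraphs: $H_n$ is an induced subgraph of $H_{n'}$ whenever $n\leq n'$, and likewise for $\overline{H_n}$, $H_n^*$, $\overline{H_n^*}$, $H'_{n,I}$ and $\overline{H'_{n,I}}$ (in each case simply keep the first $n$ of the relevant indices). Consequently, if (1) fails there is a single $n_1$ such that $G$ contains none of the six graphs at parameter $n_1$; this says precisely $G\models\neg\sigma_{n_1}\wedge\neg\theta_{n_1}$, and by nesting $G\models\neg\sigma_m\wedge\neg\theta_m$ for every $m\geq n_1$. For the failure of (2), observe via Proposition \ref{cor2.3} that a graph with chains of unbounded finite length has prime-inducing chains of unbounded finite length; hence the failure of (2) yields $t_1$ with no chain of length $\geq t_1$ in $G$. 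Since $G$ is prime, Corollary \ref{chainsandprimeness} produces a chain from $\{x_1,x_2\}$ to $x_3$ for every triple of distinct vertices, necessarily of length $\leq t_1-1$, so $G\models\psi_{t_1-1}$ and hence $G\models\psi_m$ for all $m\geq t_1-1$. Taking $m=\max\{n_1,t_1\}$ we obtain $G\models\psi_m\wedge\neg\sigma_m\wedge\neg\theta_m$ (enlarging $m$ if convenient).

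Now apply Proposition \ref{infinite3.1} with this $m$: it tells us $G$ is prime and edge-stable and, up to complementation, contains a spider with $\omega$ legs, a copy of $L(K_{2,\omega})$, or a perfect matching of length $\omega$. In the first case we land in outcome (5) (the complement of a spider is again a spider); in the second case we land in outcome (4). In the remaining case, if $G$ itself contains an infinite perfect matching $M$, apply Proposition \ref{infinite4.1} to $G$, using that $G$ is infinite, prime, edge-stable and satisfies $\psi_m$: this produces $K^{(1)}_{1,\omega}$, $L(K_{2,\omega})$, or a spider with $\omega$ legs, up to complement, i.e. outcome (3), (4), or (5). If instead $G$ contains the complement of an infinite perfect matching, run the same step on $\overline{G}$ and complement the conclusion back; this is legitimate because primeness, edge-stability and the truth of $\psi_m$ all transfer between $G$ and $\overline{G}$ (modules are invariant under complementation; $E$ is stable iff $\neg E$ is; and the defining condition of a chain is symmetric in ``neighbor'' and ``non-neighbor'', so $G$ and $\overline{G}$ have exactly the same chains).

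The only genuine content is the bookkeeping in the second step together with the nesting observation; the rest is a direct assembly of Propositions \ref{infinite3.1}, \ref{infinite4.1} and \ref{cor2.3} and Corollary \ref{chainsandprimeness}. I expect the main, though minor, obstacle to be making the complementation reductions fully rigorous, in particular checking that edge-stability and satisfaction of $\psi_m$ descend to $\overline{G}$; but these follow at once from the self-duality of the chain condition and of stability.
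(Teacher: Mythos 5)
Your proposal is correct and takes essentially the same route as the paper's proof: translate the failure of (1) and (2) into $G\models\psi_m\wedge\neg\sigma_m\wedge\neg\theta_m$ for a single $m$ (using Proposition \ref{cor2.3} and Corollary \ref{chainsandprimeness}), apply Proposition \ref{infinite3.1}, and finish with Proposition \ref{infinite4.1} in the perfect-matching case. The only difference is that you make explicit the monotonicity bookkeeping and the passage to $\overline{G}$ when the complement of a perfect matching appears (checking that primeness, edge-stability, and $\psi_m$ transfer), details the paper leaves implicit; your verification of these points is correct.
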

\begin{proof}
Suppose $G$ is an infinite prime graph which fails 1 and 2.  Since $G$ is prime but fails 2, Proposition \ref{cor2.3} implies $G$ does not contain arbitrarily long finite chains.  Thus there is $n_1\in \mathbb{N}$ such that $G\models \psi_{n_1}$.  Since $G$ fails 1, there is $n_2$ such that $G$ contains no copy of $H_{n_2}$, $H_{n_2}^*$, $\overline{H_{n_2}^*}$, or $H'_{n_2,I}$. Let $n_3=\max\{n_1,n_2\}$, then $G$ is prime and satisfies $\phi_{n_3}\wedge \neg \sigma_{n_3} \wedge \neg \theta_{n_3}$.  Applying Corollary \ref{infinite3.1}, we have that either $G$ satisfies 5 or 4, or $G$ contains an induced perfect matching of length $\omega$.  If $G$ contains an induced perfect matching of length $\omega$, Proposition \ref{infinite4.1} implies $G$ satisfies 3, 4, or 5.
\end{proof}

{\bf Proof of Theorem \ref{mainthm}}  Fix $n\geq 1$.  By definition, any finite prime graph $G$ satisfying $\sigma_n$ or $\theta_{n}$ contains one of the desired configurations.  If a finite prime graph $G$ of size at least $3$ satisfies $\neg \psi_n$, then $G$ contains three distinct points $x,y,z$ such that there is no chain of length less than or equal to $n$ from $\{x,y\}$ to $z$.  Corollary \ref{chainsandprimeness} implies that there is some chain from $\{x,y\}$ to $z$.  Therefore there is a chain $v_0,\ldots, v_t$ of length $t\geq n+1$ from $\{x,y\}$ to $z$.  Since initial sequences of chains are chains, $v_0,\ldots, v_{n+1}$ is a chain of length $n+1$.  By Proposition \ref{cor2.3}, $G$ contains a chain of length $n$ inducing a prime subgraph.  So if $G$ has size at least $3$ and satisfies $\sigma_n\vee \theta_n\vee \neg \psi_n$, we are done.

We now show there is $N$ such that any finite prime graph of size at least $N$ satisfying $\neg \sigma_n\wedge \neg \theta_{g(n)}\wedge \psi_n$ must also satisfy $\rho_n$.  This combined with the above finishes the proof.  Suppose by contradiction that no such $N$ exists.  Then there are arbitrarily large finite graphs which satisfy $\neg \sigma_n\wedge \neg \theta_{n}\wedge \psi_n\wedge \neg \rho_n$, so by compactness there is an infinite graph $G$ satisfying $\neg \sigma_n\wedge \neg \theta_{n}\wedge \psi_n\wedge \neg \rho_n$.  By Proposition \ref{infinite3.1}, $G$ is edge-stable and contains an infinite perfect matching.  But then Proposition \ref{infinite4.1} clearly implies $G\models \rho_n$, a contradiction.
\qed

\bibliography{/Users/carolineterry/Desktop/papers/science1.bib}
\bibliographystyle{amsplain}

\end{document}